\definecolor{jr@red}{RGB}{228,26,28}
\definecolor{jr@blue}{RGB}{55,126,184}
\definecolor{jr@green}{RGB}{77,175,74}
\definecolor{jr@purple}{RGB}{152,78,163}
\definecolor{jr@orange}{RGB}{255,127,0}
\definecolor{jr@yellow}{RGB}{255,255,51}
\definecolor{jr@brown}{RGB}{166,86,40}
\definecolor{jr@pink}{RGB}{247,129,191}
\definecolor{jr@gray}{RGB}{153,153,153}
  \pgfplotsset{compat=newest}
\theoremstyle{plain}
\newtheorem{lemma}{Lemma}[section]
\newtheorem{proposition}[lemma]{Proposition}
\theoremstyle{definition}
\newtheorem{definition}[lemma]{Definition}
\theoremstyle{remark}
\newtheorem{remark}[lemma]{Remark}
\newcounter{algorithmicH} %
\let\oldalgorithmic\algorithmic
\renewcommand{\algorithmic}{%
  \stepcounter{algorithmicH} %
  \oldalgorithmic} %
\renewcommand{\theHALG@line}{ALG@line.\thealgorithmicH.\arabic{ALG@line}}
\newlength{\tfwidth}
\newlength{\tfheight}
\newlength{\tfxa}
\newlength{\tfxb}
\newlength{\tfya}
\newlength{\tfyb}
\newcommand{\trimFigNoBox}[6]{%
\setlength\fboxsep{1pt}
\setlength\fboxrule{0.0pt}
\fbox{\includegraphics[width=#2, clip, trim=#3 #4 #5 #6]{#1}}%
}
\newcommand{\trimFigHeightWithBox}[6]{%
\setlength\fboxsep{0pt}%
\setlength\fboxrule{1.0pt}
\fbox{\includegraphics[height=#2, clip, trim=#3 #4 #5 #6]{#1}}%
}
\newcommand{\trimFig}[6]{%
\setlength{\tfwidth}{(#2+#2*\real{#3})+#2*\real{#4}}
\setlength{\tfheight}{(#2+#2*\real{#5})+#2*\real{#6}}%
\setlength{\tfxa}{\tfwidth*\real{#3}}%
\setlength{\tfxb}{\tfwidth*\real{#4}}%
\setlength{\tfya}{\tfheight*\real{#5}}%
\setlength{\tfyb}{\tfheight*\real{#6}}%
\trimFigNoBox{#1}{#2}{\tfxa}{\tfya}{\tfxb}{\tfyb}%
}
\newcommand{\trimhb}[6]{%

\sbox\figBox{\includegraphics{#1}}
\setlength{\tfwidth}{\the\wd\figBox}
\setlength{\tfheight}{\the\ht\figBox}
\setlength{\tfxa}{\tfwidth*\real{#3}}%
\setlength{\tfxb}{\tfwidth*\real{#4}}%
\setlength{\tfya}{\tfheight*\real{#5}}%
\setlength{\tfyb}{\tfheight*\real{#6}}%
\trimFigHeightWithBox{#1}{#2}{\tfxa}{\tfya}{\tfxb}{\tfyb}%
}
\renewcommand{\num}[2]{#1e#2} %
\newcommand{\gradient}{\nabla}
\newcommand{\partd}[2]{\frac{\partial #1}{\partial #2}}
\newcommand{\abs}[1]{\left\lvert#1\right\rvert}
\newcommand{\norm}[1]{\left\lVert#1\right\rVert}
\DeclareMathOperator{\erf}{erf}
\newcommand{\set}[1]{\ensuremath{\mathbb{#1}}}
\newcommand{\N}{\set{N}}
\newcommand{\R}{\set{R}}
\newcommand{\boldvec}[1]{\ensuremath{\mathbf{#1}}}
  \renewcommand{\vec}[1]{\boldvec{#1}}
  \newcommand{\vec}[1]{\boldvec{#1}}
\newcommand{\dv}{\boldsymbol{d}}
\newcommand{\pv}{\boldsymbol{p}}
\newcommand{\vv}{\boldsymbol{v}}
\newcommand{\xv}{\boldsymbol{x}}
\newcommand{\Bv}{\boldsymbol{B}}
\newcommand{\indfn}{{\chi\smash[t]{\mathstrut}}}
\newcommand{\indmin}{\indfn_\mathrm{min}}
\newcommand{\indmax}{\indfn_\mathrm{max}}
\newcommand{\editHighlighting}[3]{%
  \if@display%
  \textcolor{red!40!white}{\text{\sout{#1}} \textcolor{#2}{#3}}%
  \else%
  \textcolor{red!40!white}{\sout{#1} \textcolor{#2}{#3}}%
  \fi%
}
\newcommand{\editOne}[2]{\editHighlighting{#1}{blue!80!black}{#2}}
\newcommand{\editTwo}[2]{\editHighlighting{#1}{green!60!black}{#2}}
\newcommand{\editAll}[2]{\editHighlighting{#1}{orange!80!black}{#2}}
  \renewcommand{\editOne}[2]{#2}
  \renewcommand{\editTwo}[2]{#2}
  \renewcommand{\editAll}[2]{#2}
\newcommand{\editBOne}[2]{\editHighlighting{#1}{blue!80!black}{#2}}
  \renewcommand{\editBOne}[2]{#2}
\newcommand{\editCOne}[2]{\editHighlighting{#1}{blue!80!black}{#2}}
  \renewcommand{\editCOne}[2]{#2}
\newif\ifshowstatus
\begin{document}

\begin{frontmatter}

\title{%
  Scalable Implicit
  Solvers with Dynamic Mesh Adaptation for a
  Relativistic Drift-Kinetic Fokker--Planck--Boltzmann Model%
  \tnoteref{mytitlenote}}
\tnotetext[mytitlenote]{This work was jointly supported by the
  U.S. Department of Energy through the Fusion Theory Program of the
  Office of Fusion Energy Sciences and the SciDAC partnership on
  Tokamak Disruption Simulation between the Office of Fusion Energy
  Sciences and the Office of Advanced Scientific Computing; and
  through the FASTMath Institute. Los Alamos
  National Laboratory is operated by Triad National Security, LLC, for
  the National Nuclear Security Administration of U.S. Department of
  Energy (Contract No.~89233218CNA000001). Argonne National Laboratory
  is operated under contract DE-AC02-06CH11357.
}

\author[anl,vt]{Johann Rudi\corref{cor1}}
\ead{jrudi@vt.edu}

\author[anl,vt,bu]{Max Heldman}
\ead{maxh@vt.edu}

\author[anl]{Emil M. Constantinescu}
\ead{emconsta@mcs.anl.gov}

\author[lanl]{Qi Tang\corref{cor1}}
\ead{qtang@lanl.gov}

\author[lanl]{Xian-Zhu Tang}
\ead{xtang@lanl.gov}

\address[anl]{Mathematics and Computer Science Division, Argonne National Laboratory, Lemont, IL 60439.}
\address[vt]{Department of Mathematics, Virginia Tech, Blacksburg, VA 24061.}
\address[bu]{Department of Mathematics and Statistics, Boston University, Boston, MA 02215.}
\address[lanl]{Theoretical Division, Los Alamos National Laboratory, Los Alamos, NM 87545.}

\cortext[cor1]{Corresponding authors}

\begin{abstract}
In this work we consider a relativistic drift-kinetic model for
runaway electrons along with a Fokker--Planck operator for small-angle
Coulomb collisions, a radiation damping operator, and a secondary
knock-on (Boltzmann) collision source.
We develop a new
{scalable fully implicit solver utilizing finite volume and}
conservative finite
difference schemes and dynamic mesh adaptivity. A new data management
framework in the PETSc library based on the p4est library is developed to enable
simulations with dynamic adaptive mesh refinement (AMR),
\editAll{parallel computation, and load balancing.  This framework is tested
through the development of the runaway electron solver that is able to}
{distributed memory parallelization, and dynamic load balancing of computational
work.  This framework and the runaway electron solver building on the framework
are able to}
dynamically capture both bulk Maxwellian at the
low-energy region and a runaway tail at the high-energy region.
To effectively capture features via the AMR algorithm, a new AMR
indicator prediction strategy is proposed that is performed alongside the
implicit time evolution of the solution.  This strategy is complemented by the
introduction of computationally cheap feature-based AMR indicators that are
analyzed theoretically.
Numerical results quantify the advantages of the prediction strategy in better
capturing features compared with nonpredictive strategies; and we demonstrate
trade-offs regarding computational costs.
\editAll{The full solver is further verified}
{The robustness with respect to model parameters, algorithmic scalability,
and parallel scalability are demonstrated}
through several benchmark problems including manufactured solutions and
solutions of different physics models.  We focus on demonstrating the advantages
of using implicit time stepping and AMR for runaway electron simulations.
\end{abstract}

\begin{keyword}
Relativistic Fokker--Planck--Boltzmann\sep Adaptive mesh refinement\sep Fully implicit time stepping\sep Runaway electrons
\end{keyword}
\end{frontmatter}

\def\secstatus{finished}

\section{Introduction}
\label{sec:intro}

Plasma disruptions and their mitigation strategies have been one of
the most active research areas in tokamak fusion in recent
years. Studying runaway electrons, which are a major cause for
intolerable machine damages by tokamak disruptions, is of great
interest.  The runaway electrons are typically considered in the
guiding center formulation, which has been averaged over the
gyromotion. The resulting model is a relativistic
Fokker--Planck--Boltzmann model when both small-angle and large-angle
Coulomb collisions are accounted for.  For more details on the
physics, see the recent review
papers~\cite{boozer-pop-2015,breizman2019physics} and references therein.

One of the main interests in studying the runaway electrons is to
understand the impact of different collisions (large-angle,
small-angle, etc.) and radiation damping under the dominant electric
field along the magnetic field direction.  The competition between
parallel electric field acceleration and collisional/radiative drag
sets the advective term, albeit in momentum space of the relativistic
drift-kinetic equation, while the collisional energy diffusion and
pitch-angle scattering appear in the form of diffusion operators in
energy and pitch of the momentum space. There is a natural time-scale
separation between the advective and diffusive time scales, with the
latter much longer than the former at suprathermal energy.  Therefore
it is critical to step over the advective scale due to parallel
electric field acceleration of the runaway electrons and study the
collision or diffusion scale. As a result, implicit time stepping is
typically used.

The existing work based on a continuous representation
of distribution functions includes conservative finite difference and
finite volume schemes~\cite{GuoMcDevittTang2017, stahl2017norse,
hesslow2018effect, daniel2020fully}. They typically solve the
distribution function in the 2D momentum space.  Several
numerical challenges are associated with this model. First, the advection
due to the parallel electric field is dominant in  most  of the
domain, but the diffusion coefficient is nonuniform and becomes larger
when getting close to the thermal velocity. Since we are generally
interested in the impact of collisions, a desired algorithm should
step over the fast advection time scale. Second, the distribution
function becomes very anisotropic in pitch and has a nontrivial fat (in energy) tail as
time evolves, which is commonly referred to a ``runaway tail.''  A
corresponding treatment in meshing is necessary to resolve the
highly anisotropic (in pitch) tail. Third, the distribution function can span over 10 or more orders of
magnitude. An accurate numerical algorithm is needed to resolve such a
large range of values.  This work  addresses all these
issues.  For the first issue, we will rely on implicit time stepping,
nonlinear solvers, and algebraic multigrid preconditioners. The second
issue will be resolved through developing an adaptive mesh refinement (AMR) capability, which is a
major focus of the current work. The third issue will be resolved
through the accurate numerical schemes in both space and time as well
as a modeling choice to chop off a large portion of a Maxwellian close
to the thermal velocity.
In both fluid and kinetic models, AMR plays a key role in fusion
device modeling because of localized interesting structures such as
thin current sheets, plasmoids, and very anisotropic
distributions. Many of the previous related studies focus on fluid
plasma models such as extended magnetohydrodynamics (MHD) or its
equilibrium (see, e.g.,~\cite{strauss1998adaptive, philip2008implicit,
baty2019finmhd, peng2020adaptive, tang2022adaptive} and the references
therein), while there are only a limited number of studies for
continuous kinetic simulations~\cite{hittinger2013block,
adams2017landau, wettervik2017relativistic}.  \editBOne{}{ In the
broader context of statistical physics for a variety of applications,
AMR has found usage in improving the computational efficiency since it
``allows resolving important regions of phase space where the
particles are present and reduce the number of cells in the regions
with no particles.''\cite{arslanbekov2013kinetic}.  In
Ref.~\cite{kolobov2019boltzmann}, a 1D1V electron kinetic solver with
AMR was explicitly demonstrated for the thermal electron runaway
acceleration, also known as Dreicer acceleration, in what amounts to
be an electrostatic glow discharge for low-temperature plasma
generation.  The prospective advantage of AMR in mesh-base kinetic
solvers, as pointed out in these previous work, holds even greater
promise and importance in the current problem of Ohmic-to-runaway
current conversion in a magnetized plasma.  This is because the strong
energy dependence of plasma collisionality introduces a distinct set
of numerical challenges that AMR must be tailored to overcome in this
paper.  First, the energy range of the electrons can easily span 6--7
orders of magnitude between thermal electrons (eVs) and the runaway tail ($>$ MeV),
which produces huge variations in plasma collisionality.  Second,
extreme anisotropy in the momentum space, in the electron pitch $\xi,$
is increasingly aggravated for higher electron energies.  Third, the
runaway tail distribution, which is the quantity of interest for this
type of problems, can be 12 orders of magnitude, or more, lower than
the bulk electron density, but the resolution of this tail is critical to
evaluate the primary runaway seeds for the follow-up avalanche
growth.  Much of our considerations in AMR adaptation strategy,
spatial and temporal discretization, and the necessary implicit
solvers for vastly varying plasma collisionality as a function of
electron energy, is geared toward overcoming these extreme numerical
challenges.}

One of the most important pieces in any of those AMR algorithms is an
indicator to determine where to refine and coarsen the mesh. There are
two types of AMR indicators in the literature: error-estimator-based
and feature-based. For instance, an indicator based on the features in
the current and vorticity is proposed in the adaptive MHD
solver~\cite{philip2008implicit}, while a weighted sum of several
finite element error estimators is used as the indicator for the same
resistive MHD system in~\cite{tang2022adaptive}. Other common
error-estimator-based approaches for finite difference/finite volume
include Richardson extrapolation~\cite{berger1989local}, solution
smoothness via derivatives~\cite{Constantinescu_A2008a}, and
goal-oriented strategies~\cite{offermans2022error}. These probe
certain aspects of the solution error but can become expensive if one
considers \editAll{less}{} frequent regridding.
Another common issue is that the AMR indicators are based on the
present solution, which results in adaptive meshes being lagged
compared with features of interest because regridding often
happens every ten or more time steps.
For another class of problems in wave propagation, explicit time stepping is
computationally attractive; and in this explicit case one time step with
explicit Euler can be taken \cite{GuoCheng17} to approximate mesh refinement
one time step in the future.
In this work, where we focus on implicit time stepping, we explore several
different types of AMR indicators and propose predicting these AMR indicators
for multiple time steps.

\editAll{}{
The computational challenges of solving for the runaway electron distribution
from relativistic Fokker--Planck and Fokker--Planck--Boltzmann
equations---despite being linear partial differential equations (PDEs)---are:
(i) the ability to resolve the dynamically evolving runaway distribution both
near the runaway vortex regime and at a parallel electric field that is
significantly beyond the onset of the runaway vortex;
(ii) the solution being a small but positive function that varies 10--20 orders of
magnitude in momentum space;
(iii) the competing effects of advection- and diffusion-dominated regimes that
are shifting throughout the momentum space.
}

\editAll{}{
Challenges (i) and (ii) are addressed with an AMR discretization to resolve
the dynamical structure of the electron distribution in momentum space; while
(iii) is tackled with an algorithmically scalable solver for implicit time
stepping on the AMR discretization.
}
\editAll{}{Specifically}, we develop adaptive, scalable, fully implicit solvers
for a time-dependent relativistic drift-kinetic Fokker--Planck--Boltzmann model in
the momentum space.  The octree-based AMR
package {\tt p4est}~\cite{BursteddeWilcoxGhattas11}
is used as the basic framework to provide a parallel dynamic AMR
algorithm.
The numerical schemes are based on conservative finite difference or
finite volume algorithms, such as, the QUICK and MUSCL schemes
\cite{LeVeque02, Toro09, Leonard79}.  The solvers are
developed under the {\tt PETSc} framework \cite{petsc-user-ref}, and
our focus is fully implicit time stepping, nonlinear solvers, and
preconditioning strategies.  Diagonally implicit
Runge--Kutta (DIRK)~\cite{alexander1977diagonally} or explicit first-stage singly diagonally implicit Runge--Kutta (ESDIRK)
integrators \cite{giraldo2013implicit} %
are chosen. The nonlinear solver is based on a Jacobian-free
Newton--Krylov (JFNK) method \cite{knoll2004jacobian}, which is
preconditioned by a finite difference coloring Jacobian and
hypre's algebraic multigrid preconditioner \cite{hypre,
FalgoutYang02}.  While developing the adapative solvers, we present and analyze
several feature-based AMR indicators for the distribution function.
We further propose predicting these AMR indicators for multiple time steps, which
can be done at a fraction of the computational cost when reducing the original
advection--diffusion PDE to a pure-advection PDE.  We quantify the benefits of
predictive AMR on the resolution of distribution functions and we describe the
limitations when trading off computational costs for accuracy; this will be
done with extensive numerical experiments.
\editOne{}{
We demonstrate the robustness of our numerical solvers and preconditioners for
the computationally challenging runaway electron models within a simulation with
dynamic AMR.  We achieve problem-robust and algorithmically scalable methods.
Additionally, we document the possibilities and limitations for parallel
scalability, where the existence of limitations is inherent to any implicit scheme.
}

The rest of the paper is organized as follows.
Section~\ref{sec:governing-eqns} gives an overview of the relativistic
Fokker--Planck--Boltzmann model. The discussions focus on the forms of the
collision operator, synchrotron radiation, and the full equation in
the momentum space.
Section~\ref{sec:amr} describes \editTwo{AMR algorithms and several AMR
indicators}{the key contributions related to AMR algorithms}.
In particular, a prediction approach for AMR indicators is proposed.
Section~\ref{sec:solver} presents the details of \editTwo{the}{further key
contributions: the robust and algorithmically scalable} numerical schemes and
their implementation in our code framework.
Section~\ref{sec:setup} focuses on the experimental setup and reproducibility.
We describe several manufactured solutions for accuracy tests and some details
on computational environments.  We also discuss the impact of the field and
knock-on collision operators.
Section~\ref{sec:results} presents several numerical examples, which include
\editTwo{both algorithmic and}{algorithmic scalability, parallel scalability,
and a} physics study.
The conclusions and discussion of future work are given in
Section~\ref{sec:conclusion}.

\def\secstatus{finished}

\section{Governing equations}
\label{sec:governing-eqns}

We study the phase-space dynamics of a runaway electron distribution
and consider a simplified version of the relativistic drift-kinetic
Fokker--Planck--Boltzmann equation~\cite{brizard1999nonlinear}. The
governing equations we are considering are effectively described in
the spherical coordinates of $(p, \xi, \theta)$, with $p$ the
normalized momentum magnitude, $\xi = p_\parallel/p$ the pitch,
and $\theta$ the azimuthal angle.  Here $p$ is dimensionless
with normalization by $m_e c$, with $m_e$ the rest mass
of an electron and $c$ the speed of light, and $\xi$ is the cosine
of the polar angle in the standard spherical coordinate.  The parallel
direction,  denoted by subscript $\parallel,$ aligns with the
magnetic field.  We further follow the guiding center model and assume
the solution has azimuthal symmetry, which reduces the momentum space
to two dimensions.  This corresponds to a momentum-space domain of
$[0, +\infty]\times[-1, 1]$.  The normalized relativisitic
Fokker--Planck--Boltzmann (RFP) equation to describe runaway electron
distribution, $f(t, p, \xi),$ in a slab geometry, is given by
\begin{align}
\label{eqn:rfp}
\frac{\partial f}{\partial t} - E \left( \xi \frac{\partial f}{\partial p} + \frac{1-\xi^2}{p} \frac{\partial f}{\partial \xi} \right) = C(f) + \alpha R(f) + S(f)\,,
\end{align}
where $E$ stands for the normalized electric field parallel to the
magnetic field in a tokamak and its normalization scale is
$E_c \coloneqq m_e c/ e \tau_c$ with $e$ the elementary charge
and $\tau_c$ the time scale.  Here the time scale of the
equation, $\tau_c$, is the relativistic electron collision time given
by
\begin{align}
\tau_c \coloneqq \frac{ 4\pi \epsilon_0^2 m_e^2 c^3}{e^4 n_e \ln \Lambda},
\end{align}
where $n_e$ is the (background) thermal electron density, $\epsilon_0$
is the electrical permittivity, and $\ln \Lambda$ is the Coulomb
logarithm.  The so-called Connor--Hastie $E_c$ sets the critical value for
runaway electron generation~\cite{connor1975relativistic}.  In the
current study all those values are given as constant based on
practical devices such as ITER. On the right-hand side are
several secondary effects due to different types of collisions and
synchrotron radiation (defined in detail later), and $\alpha$ is
introduced to describe the intensity of radiation damping. All the physical
quantities appearing in the current work have been summarized
in~\ref{sec:quantities}.

This problem has been well studied by plasma
physicists~\cite{GuoMcDevittTang2017, stahl2017norse,
hesslow2018effect, Guo-etal-PoP-2019, daniel2020fully}, commonly using conservative
finite difference or finite volume schemes on a stretched grid. The
dynamics of the equation is primarily dominated by the advection term
due to the strong electric field. The distribution function has a
sharp boundary layer at $p = p_{\min}$ due to the boundary condition
(defined in detail later); and as time evolves, the distribution
accumulates around $\xi = -1$ if $E>0$ (or $\xi = 1$ if $E<0$) due to
the advection. The distribution is further impacted by the collisions
and the source term in the large $p$ region and eventually forms a
nontrivial tail structure that is of the greatest interest. A majority
of the previous work uses the stretched grid to resolve the localized
distribution and tails. Adaptive mesh refinement has not been
explored on this problem to the best of our knowledge.

We note that in~\cite{daniel2020fully} a different coordinate of
$(p_\parallel, p_\perp)$ is used.
Choosing the coordinate of $(p, \xi)$ for the RFP equation has several advantages. The
solution is highly localized, as found in~\cite{GuoMcDevittTang2017,
stahl2017norse, hesslow2018effect,Guo-etal-PoP-2019, daniel2020fully}. It is more
efficient to resolve the same structure in $(p, \xi)$ while the
coordinate of $(p_\parallel, p_\perp)$ leads to half of the domain
resolving a trivial distribution tail.  Note that the physics of
interest in this study is the small population of runaway electrons.
It is therefore much easier to resolve this small population tail in
$(p, \xi)$ by chopping off the large Maxwellian bulk close to $p = 0$.
\editBOne{One disadvantage of using $(p, \xi)$ is the coordinate singularity
when $p_{\min}\rightarrow 0,$ and the fine grid spacing in $\xi$ near
the $\xi=-1$ boundary where runaway electrons of small $p_\perp=\xi p$
are needed to be resolved for large $p.$ Here}{We often} set $p_{\min}$ to
correspond to a few times the thermal energy of the background
electrons, and we impose an approximate Dirichlet boundary condition given
by the Maxwellian bulk, which is commonly adopted in the physics
codes~\cite{GuoMcDevittTang2017, stahl2017norse, hesslow2018effect,
Guo-etal-PoP-2019, daniel2020fully} and will be detailed later.
\editBOne{}{We also provide a test that extends the boundary condition to $p=0$
through a compatible boundary condition.}

\subsection{Coulomb collision}
In this study the bulk distribution is considered to be close to a
Maxwellian with a thermal speed
much less than the light speed, namely, $v_t/c \ll 1$, where $v_t$
stands for the thermal velocity of the background electrons.  The
(small-angle) Coulomb collision of the runaway electrons, which is of
much lower density, with the
background electrons, $C(f),$ can be approximated by the so-called
test-particle collision operator~\cite{GuoMcDevittTang2017,
stahl2017norse}. We use the test-particle collision operator proposed
in~\cite{papp2011runaway} that is applicable to both thermal
($p\sim v_t/c \ll 1$) and relativistic ($p > 1$) electrons.  This
Coulomb collision operator has the form
\begin{align}
\label{eq:collision}
  C(f) =
    \frac{1}{p^2}\partd{}{p}\left[p^2\left(C_F f + C_A\partd{f}{p}\right)\right] +
    \frac{C_B}{p^2}\partd{}{\xi}\left[\left(1-\xi^2\right)\partd{f}{\xi}\right],
\end{align}
where all the collision coefficients are $p$-dependent and given by
\begin{align*}
  C_F &= \frac{2 c^2}{v_t^2}\Psi(x),\\
  C_A &= \frac{\sqrt{1+p^2}}{p}\Psi(x),\\
  C_B &= \frac{\sqrt{1+p^2}}{2p}
         \left(Z + \erf(x) - \Psi(x) + \frac{v_t^2} {2 c^2} \, \frac{p^2}{1+p^2}\right).
\end{align*}
Here $x$ is $p$-dependent,
\[
  x = \frac{c}{v_t} \, \frac{p}{\sqrt{1+p^2}},
\]
$Z$ stands for a charge number, and $\Psi(x) = [\erf(x) -
x \erf'(x)]/(2x^2)$ is the Chandrasekhar function.  A study of the
collision coefficients in a transition from nonrelativistic to
relativistic regimes is presented in~\cite{GuoMcDevittTang2017}, where the authors show   that the collision coefficients at small $p$ are several
orders of magnitude larger than those at larger $p$ and thus the
diffusion becomes dominant compared with the advection field.
Therefore, a fully implicit treatment of the collision operator is
necessary.  When needed, the collision coefficients are also modified
to include the partial screening effect~\cite{hesslow2018effect}, which
addresses the role of bound electrons in collisions between fast
electrons and partially ionized impurities.  Mathematically, it
corresponds to modifying the $C_B$ and the Coulomb logarithm
$\ln \Lambda$, but it does not involve additional numerical
challenges.  For more details, see~\cite{hesslow2017effect,
hesslow2018effect}.

\subsection{Synchrotron radiation}
When there is an acceleration of a charged particle,
there is an energy loss through the emitted
electromagnetic radiation.
In a dominating magnetic field where a charged particle follows cyclotron motion,
relativistic electrons can emit a significant amount of synchrotron radiation.  The
resulting radiation damping operator satisfies
\begin{align}
\label{eq:radiation}
  R(f) = \frac{1}{p^2}\partd{}{p}\left[p^3\gamma(1-\xi^2)f\right] -
               \partd{}{\xi}\left[\frac{\xi(1-\xi^2)}{\gamma}f\right],
\end{align}
where $\gamma=\sqrt{1+p^2}$ is the Lorentz factor.  The dimensionless
parameter for the intensity of damping is formally defined as
$\alpha\coloneqq\tau_c/\tau_s$, a ratio between the collision time
scale and the synchrotron radiation time scale, where
$\tau_s \coloneqq 6\pi \epsilon_0 m_e^3 c^3/e^4 B^2$ and $B$ is the
magnitude of the magnetic field.  Typically it is about 0.1 to 0.3 for
ITER-like tokamaks and 0.001 to 0.05 for a smaller device, which is
the range we use in our simulations.

\subsection{Knock-on collision}
\label{sec:knock_on}
Another important source in runaway electron generations is the
avalanche mechanism due to knock-on collisions between runaway
electrons and the background thermal
electrons~\cite{Rosenbluth_1997,chiu1998fokker}.  The knock-on source
considers the secondary large-angle collisions in addition to the
primary small-angle collisions of the Fokker--Planck
operator~\cite{boozer-pop-2015,breizman2019physics}.  The knock-on
source is
\begin{align}
S(p, \xi) = S_1(p, \xi) + S_2(p,\xi),
\label{eqn:chiu_full}
\end{align}
which consists of the secondary electron source term
\begin{align}
S_1(p, \xi) = \int_{-1}^1 \int_0^\infty \Pi(\gamma', \xi'; \gamma, \xi) f(p', \xi') 2 \pi p'^2 dp' \, d\xi',
\end{align}
and the annihilation term
\begin{align}
S_2(p, \xi) = -f(p, \xi)  \int_{-1}^1 \int_0^\infty \Pi(\gamma, \xi; \gamma', \xi') 2 \pi p'^2 dp' \, d\xi',
\end{align}
with $\gamma = \sqrt{1+p^2}$ and $\gamma' = \sqrt{1+p'^2}$.

In this work we use the Chiu model in~\cite{chiu1998fokker} and approximate the scattering rate function in $S_1$ by
\begin{align}
\Pi(\gamma', \xi'; \gamma, \xi) \approx \frac 1 {\ln \Lambda} \frac{p'^2}{2\pi p^2 \, |\xi|} \frac{d \sigma (\gamma', \gamma)}{dp} \, \delta(p' - p^*),
\end{align}
where $\delta(\cdot)$ is a Dirac delta function that indicates only electrons with particular energy at $p^*$ can generate the required secondary electron at a given point $(p, \xi)$
with $p^* = \sqrt{(\gamma^*)^2-1}$ and $\gamma^* = \frac{(\gamma+1)/(\gamma-1) \, \xi^2 + 1}{(\gamma+1)/(\gamma-1) \, \xi^2 - 1}$.
The M{\o}ller cross section for large-angle collisions is used,
\begin{align}
 \frac{d \sigma (\gamma', \gamma) }{dp} = \frac{p}{\gamma}\, \frac{2\pi\gamma'^2 }{(\gamma'-1)^3(\gamma'+1)}\left[
 x^2 -3 x + \left( \frac{\gamma'-1}{\gamma'}\right)^2 (1+x) \right],
\end{align}
where
\begin{align*}
x = \frac 1{\nu(1-\nu)},\quad \text{and} \quad \nu = \frac{\gamma-1}{\gamma'-1}.
\end{align*}
Note that the M{\o}ller cross section is turned on only if $\gamma'\ge 2\gamma-1$;  otherwise $S_1$ is set to  0, which is commonly imposed
(see~\cite{mcdevitt2019avalanche}, for instance).
One can easily show that the source term $S_1$ is  positive only in a thin region of $\xi \in [-\sqrt{\gamma/(\gamma+1)}, -p/(\gamma+1)]$.
An adaptive mesh is beneficial to resolve this thin region.
The annihilation term is integrated in the range of $\gamma' \in [\gamma_0, \frac{\gamma+1}2]$,
where $\gamma_0$ is computed from the computation domain, namely, $\gamma_0 = \sqrt{1+p_{\min}^2}$.
The M{\o}ller cross section is well defined in such a range.
This leads to
\begin{align}
S_2(p,\xi) = -\frac 1{\ln\Lambda} f(p, \xi) \, \sigma(\gamma, \gamma_{0}),
\end{align}
where $\sigma(\gamma, \gamma_{0})$ is the integrated form of the M{\o}ller cross section, given by
\begin{align*}
\sigma(\gamma, \gamma_{0}) = \frac{2\pi}{\gamma^2-1}\left[ \frac{\gamma+1}2 -\gamma_{0} - \gamma^2\left(\frac{1}{\gamma-\gamma_{0}} - \frac 1{\gamma_{0}-1} \right) + \frac{2\gamma-1}{\gamma-1}\ln \frac{\gamma_{0}-1}{\gamma-\gamma_{0}} \right]
\end{align*}
for $\gamma\ge2\gamma_0-1$ or otherwise $\sigma(\gamma, \gamma_0) = 0$.

\subsection{Computational domain and boundary conditions}
Since the focus of this study is on relativistic runaway electrons where
the bulk of the distribution is still assumed to be a Maxwellian, it
is reasonable to use a computational domain: $[p_{\min},
p_{\max}] \times [-1, 1]$, where $p_{\min}$ is chosen as being thermal
(e.g., let $p_{\min} = v_t/c$ or a few times of $v_t/c$).  One can
further assume that the left boundary condition at $p = p_{\min}$ is
Dirichlet and given by the Maxwell--J\"uttner distribution, which is
uniform in $\xi$. At the right boundary of $p = p_{\max}$, a Neumann
boundary condition $\partial f/\partial n = 0$ is prescribed, so one
must check the solution to make sure that $p_{\max}$ is high enough that
$f$ is negligibly small there.  At the top and bottom boundaries, because of the involvement of the coefficient $1-\xi^2$, all the fluxes along
$\xi$ become 0, which is in fact the natural outcome of the coordinate
chosen. Therefore, Neumann boundary conditions are used at the top and
bottom boundaries.

\subsection{Conservation and an alternative RFP form}
The determinant of the Jacobian  corresponding to the transformation
between $(p_\parallel, \vec{p}_\perp)$ to $(p, \xi)$, under azimuthal
symmetry,  is given by $J=p^2$ (see \ref{sec:coordinate}).
Thus, the definition of the divergence of a vector $\vv = (v_1, v_2)$ in $(p,\xi)$ becomes
\[
\nabla \cdot \vv \coloneqq
  \frac 1 {p^2} \Big[ \frac{\partial}{\partial p}(p^2 v_1) +
  \frac{\partial}{\partial\xi} (p^2 v_2) \Big].
\]
One can
\editAll{easily}{}
show that the electric field ``advection'' term
in~\eqref{eqn:rfp}, $\big(-E \xi, - E ({1-\xi^2})/{p} \big)$, is
divergence-free. This is in fact more obvious in the coordinate of
$(p_\parallel, \mathbf{p}_\perp)$, as the RFP equation in
$(p_\parallel, \mathbf{p}_\perp)$ becomes
\[
\frac{\partial f}{\partial t} - E \frac{\partial f}{\partial p_\parallel}= C(f) + \alpha R(f) + S(f)
\]
and $E$ is a constant.
\editAll{It is also easy to see that}{Moreover,}
$f p^2$ is a conserved value in the coordinate of $(p, \xi)$, which is consistent with the fact that the distribution function is conservative in $(p_\parallel, \mathbf{p}_\perp)$. As a result, a
\editOne{finite volume}{conservative finite difference}
method is chosen in this study to satisfy the conservation of the distribution.

\editAll{In our implementation}{Following the previous work~\cite{GuoMcDevittTang2017}}, we choose to solve $\tilde f \coloneqq fp$. Then we can rewrite the RFP equation~\eqref{eqn:rfp} as
\begin{align}
\label{eqn:conser2}
  \partd{\tilde f}{t} +
  \frac{1}{p}\partd{\Gamma_p(\tilde f)}{p} +
  \partd{\Gamma_\xi(\tilde f)}{\xi}
  =   S(\tilde f),
\end{align}
where the fluxes are
\begin{align}
\label{eqn:conser2-flux-p}
  \Gamma_p(\tilde f) &=
    -p \left[ E\xi +\alpha p \gamma (1-\xi^2) \right] \tilde f -
    (p C_F - C_A) \tilde f - p C_A \partd{\tilde f}{p},\\
\label{eq:conser2-flux-xi}
  \Gamma_\xi(\tilde f) &=
    -(1-\xi^2) \left[ \left(\frac E p - \frac{\alpha  \xi}{\gamma} \right) \tilde f +
                      \frac{C_B}{p^2}\partd{\tilde f }{\xi} \right].
\end{align}
This conservative form is a choice of our implementation, \editAll{}{which eases the head-to-head comparison between the current implementation with our previous one~\cite{GuoMcDevittTang2017}.
This choice is not critical for the success of the proposed AMR algorithm.}
\editAll{In  previous work [3], it is shown that solving for $\tilde f$ helps diminish the negativity of the distribution function in the tail region.}{}

\def\secstatus{finished}

\section{Adaptive mesh refinement: Indicators and their prediction in time}
\label{sec:amr}

\editAll{Indicators are flags that correspond to each grid point and determine}
{Indicators are spatial fields that determine at each grid point}
whether to refine or coarsen the mesh.
This section presents the indicators for adaptivity and shows how indicators are
propagated forward in time ahead of the simulation in order to predict the
 regions of the mesh that need to be refined and derefined.
We consider an abstract setting; hence the discussion is general and applicable
beyond the RFP equation, which is otherwise the focus of this work.
First, we present how the indicators are computed, and then we propose an approach for
the prediction of indicators.

\subsection{AMR indicators for coarsening and refinement}
\label{sec:amr-indicators}

We define an indicator as a functional, $\indfn$, that maps a function, say
$f$, to a single positive scalar value for each mesh cell $\Omega_h$
within the entire domain $\Omega$.  Hence we write
\[
  \indfn(f;\Omega_h)\in\R_+ \quad\text{for each }\Omega_h\subset\Omega.
\]
In the following, the dependence on $\Omega_h$ is assumed implicitly such
that we can use the brief notation $\indfn(f) = \indfn(f;\Omega_h)$.
The result is then used to evaluate the criteria for AMR.  A mesh cell is coarsened if the indicator is below a threshold
\[
  \indfn(f) < \indmin,
\]
and it is refined if the indicator is above a threshold
\[
  \indmax < \indfn(f),
\]
where $0\le\indmin<\indmax$ are (prescribed) constants that stay fixed during
the simulation.  These criteria for coarsening and refinement are carried
out uniformly for every cell of the mesh.  This process yields the requirement
that $\indfn(f)$ has to produce values that are independent of any scale,
because the function $f$ is likely to vary significantly.  Indeed, large
variations of $f$ are the reason to utilize AMR in the first place.

To proceed to defining AMR indicators, we introduce the following
preliminary definitions and observations.
Let $f:\Omega\rightarrow\R_+$ be a positive\footnote{%
  To simplify the
  notation, we assume the function $f$ to have positive values. One could, however, remove the assumption on positivity and, in place
  of $f$, use $\abs{f}+\varepsilon$ with a chosen $0 < \varepsilon \ll 1$.
} and continuously differentiable
function, $f\in C^1(\Omega)$, over an open domain $\Omega\in\R^d$.  We define
the gradient scale of a function $f>0$ as the nondimensionalized quantity
$(\gradient f)/f$, and as a result the scale of $f$ is neutralized.
It satisfies the \emph{gradient-scale identity}
\begin{equation}
\label{eq:gs-identity}
  \frac{\gradient f(x)}{f(x)} = \gradient\left(\log f(x)\right),
  \quad\text{for }x\in\Omega.
\end{equation}
Since we are concerned with meshes of a finite resolution, we denote
$\Omega_h\subset\Omega$ to be a mesh cell with a \emph{characteristic
size} $h>0$, and we let $\overline{\Omega}_h$ be its closure.  Then we define a
\emph{discrete local gradient magnitude}, which is motivated by finite
difference gradients,
\begin{equation}
\label{eq:discrete-grad}
  G_h[f](x) \coloneqq
    \max_{y\in\overline{\Omega}_h} \frac{\abs{f(x)-f(y)}}{h},
  \quad\text{for }x\in\Omega_h,
\end{equation}
where $\abs{\cdot}$ denotes the absolute value.  Note that for
\eqref{eq:discrete-grad} to be well defined, it is only required
for $f$ to be Lipschitz continuous.

We define two AMR indicators based on the gradient scale (GS) in two different
ways, therefore making use of both sides of the identity \eqref{eq:gs-identity}.

\begin{definition}[Gradient-scale indicator]
\label{def:gs-ind}
Let $f>0$ be Lipschitz continuous, and let $\Omega_h\subset\Omega$ be a
cell of the mesh.
The first version of the GS indicator is
\begin{equation}
\label{eq:gs}
  \indfn_{GS}(f) \coloneqq
    h \max_{x\in\overline{\Omega}_h} \left( \frac{G_h[f](x)}{f(x)} \right).
\end{equation}
The second version is
\begin{equation}
\label{eq:lgs}
  \indfn_{LGS}(f) \coloneqq
    h \max_{x\in\overline{\Omega}_h} \Bigl( G_h[\log f](x) \Bigr).
\end{equation}
\end{definition}

The indicators in Definition~\ref{def:gs-ind} are derived from a discrete
version of the gradient-scale identity with an additional multiplication by the
characteristic size $h$ of the cell.  The multiplication by $h$ reduces
the value of the indicator as the mesh is refined, and it also neutralizes
division by $h$ appearing in the discrete gradient \eqref{eq:discrete-grad}.
Hence, the indicator becomes nondimensionalized regarding the cell
size.  Overall, because we eliminated the scale of the function $f$ and the
scale of the discrete gradient $h$, the resulting indicators $\indfn_{GS}$ and
$\indfn_{LGS}$ are nondimensional.

An alternative indicator based on the dynamic ratio (i.e., maximum value
divided by minimum value of $f$ in $\overline{\Omega}_h$) is defined next.
Subsequently, a relation between the two definitions of indicators is
established in Proposition~\ref{prop:lgs-vs-ldr}.

\begin{definition}[Log-DR indicator]
\label{def:ldr-ind}
We define an AMR indicator based on the logarithm of the dynamic ratio (DR)
within a cell $\Omega_h$
\begin{equation}
\label{eq:ldr}
  \indfn_{LDR}(f) \coloneqq
    \log\left( \frac{\max_{x\in\overline{\Omega}_h} f(x)}
                    {\min_{x\in\overline{\Omega}_h} f(x)} \right).
\end{equation}
\end{definition}

\begin{proposition}[Equivalence between gradient scale and log-DR indicators]
\label{prop:lgs-vs-ldr}
Given a Lipschitz continuous function $f>0$, let $\indfn_{LGS}$ be the gradient-scale AMR indicator defined in \eqref{eq:lgs} and $\indfn_{LDR}$ be the log-DR
indicator defined in \eqref{eq:ldr}.  Then the two indicators satisfy
\begin{equation}
\label{eq:lgs-vs-ldr}
  \indfn_{LGS}(f) = \indfn_{LDR}(f).
\end{equation}
\end{proposition}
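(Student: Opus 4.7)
The plan is to unfold both indicator definitions, note that the factor of $h$ in $\indfn_{LGS}$ is precisely designed to cancel the $1/h$ in the discrete local gradient $G_h$, and then observe that the resulting double maximum of $|\log f(x)-\log f(y)|$ over $\overline{\Omega}_h\times\overline{\Omega}_h$ is attained at the pair where $\log f$ is largest and smallest.

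First I would substitute \eqref{eq:discrete-grad} (applied to $\log f$, which is Lipschitz continuous because $f>0$ is Lipschitz and bounded away from $0$ on the compact set $\overline{\Omega}_h$) into \eqref{eq:lgs}. This yields
\[
  \indfn_{LGS}(f)
  = h \max_{x\in\overline{\Omega}_h} \max_{y\in\overline{\Omega}_h}
      \frac{|\log f(x)-\log f(y)|}{h}
  = \max_{x,y\in\overline{\Omega}_h} |\log f(x)-\log f(y)|,
\]
after collapsing the nested maxima and cancelling $h$.

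Next I would argue that the maximum of $|\log f(x)-\log f(y)|$ over the compact set $\overline{\Omega}_h\times\overline{\Omega}_h$ is attained at $(x^\star,y^\star)$ with $f(x^\star)=\max_{\overline{\Omega}_h} f$ and $f(y^\star)=\min_{\overline{\Omega}_h} f$ (up to swapping $x^\star$ and $y^\star$). This follows from the strict monotonicity of $\log$ on $(0,\infty)$: for any pair $(x,y)$ we have
\[
  |\log f(x)-\log f(y)|
  \le \log\!\Bigl(\max_{\overline{\Omega}_h} f\Bigr) - \log\!\Bigl(\min_{\overline{\Omega}_h} f\Bigr),
\]
with equality at $(x^\star,y^\star)$. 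Applying $\log(a)-\log(b)=\log(a/b)$ then gives exactly $\indfn_{LDR}(f)$ as in \eqref{eq:ldr}, establishing \eqref{eq:lgs-vs-ldr}.

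I do not foresee any real obstacle: the proof is essentially a bookkeeping exercise once one recognizes that the purpose of the $h$ prefactor in the LGS definition is to neutralize the $1/h$ inside $G_h$. The only minor point deserving mention is that the extrema in the definition of $\indfn_{LDR}$ are attained (so the ratio in \eqref{eq:ldr} is well defined), which holds because continuous functions on the compact set $\overline{\Omega}_h$ attain their extrema and $f>0$ keeps the quotient finite. No additional smoothness of $f$ beyond Lipschitz continuity is required, matching the hypothesis of the proposition.
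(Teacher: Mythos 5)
Your proof is correct and follows essentially the same route as the paper: unfold the definitions so the $h$ prefactor cancels the $1/h$ in $G_h$, reduce to $\max_{x,y\in\overline{\Omega}_h}\abs{\log f(x)-\log f(y)}$, and use the monotonicity of $\log$ to identify this with $\log(\max f/\min f)$. Your observation that the extremal pair is attained where $f$ achieves its maximum and minimum is just a more direct phrasing of the paper's case split on the sign inside the absolute value, so there is nothing to add.
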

\begin{proof}
Starting from the definition \eqref{eq:lgs}, we obtain
\begin{align*}
  \indfn_{LGS}(f)
  &=
    h \max_{x\in\overline{\Omega}_h} \Bigl( G_h[\log f](x) \Bigr)
  \\&=
    h \max_{x,y\in\overline{\Omega}_h}
      \left( \frac{\abs{\log f(x) - \log f(y)}}{h} \right)
  \\&=
    \max_{x,y\in\overline{\Omega}_h} \abs{\log\left(\frac{f(x)}{f(y)}\right)}.
\end{align*}
Utilizing the monotonicity of the logarithm and $f>0$, we continue
\begin{align*}
  \max_{x,y\in\overline{\Omega}_h} \abs{\log\left(\frac{f(x)}{f(y)}\right)}
  &=
    \max\left\{
      \max_{x,y\in\overline{\Omega}_h}  \log \left(\frac{f(x)}{f(y)}\right),
      -\min_{x,y\in\overline{\Omega}_h}  \log \left(\frac{f(x)}{f(y)}\right)
    \right\}
  \\&=
    \max\left\{
       \log \left(\max_{x,y\in\overline{\Omega}_h} \frac{f(x)}{f(y)}\right),
      -\log \left(\min_{x,y\in\overline{\Omega}_h} \frac{f(x)}{f(y)}\right)
    \right\}
  \\&=
    \max\left\{
       \log \left( \frac{\max_{x\in\overline{\Omega}_h} f(x)}
                        {\min_{y\in\overline{\Omega}_h} f(y)} \right),
      -\log \left( \frac{\min_{x\in\overline{\Omega}_h} f(x)}
                        {\max_{y\in\overline{\Omega}_h} f(y)} \right)
    \right\}.
\end{align*}
Additionally, the second argument in the outermost maximum is
\[
  -\log \left( \frac{\min_{x\in\overline{\Omega}_h} f(x)}
                    {\max_{y\in\overline{\Omega}_h} f(y)} \right)
  =
  \log \left( \frac{\min_{x\in\overline{\Omega}_h} f(x)}
                   {\max_{y\in\overline{\Omega}_h} f(y)} \right)^{-1}.
\]
Therefore we can simplify
\[
  \max_{x,y\in\overline{\Omega}_h} \abs{\log\left(\frac{f(x)}{f(y)}\right)}
  =
  \log \left( \frac{\max_{x\in\overline{\Omega}_h} f(x)}
                   {\min_{y\in\overline{\Omega}_h} f(y)} \right),
\]
which shows the claim of the proposition.
\end{proof}

\begin{remark}
The indicator $\indfn_{LGS}$ can be seen in relation to the Lipschitz constant.
Let us assume that $K_h$ is a discrete version of the Lipschitz constant such
that
  $\abs{\log f(x) - \log f(y)} \le K_h \abs{x - y}$
for all $x,y\in\overline{\Omega}_h$ with $h\le\abs{x-y}$.
Then the following estimate holds:
\[
    \max_{\substack{x,y\in\overline{\Omega}_h \\ h\le\abs{x-y}}}
    \frac{\abs{\log f(x) - \log f(y)}}{\abs{x - y}}
  \le
    \max_{x,y\in\overline{\Omega}_h}
    \frac{\abs{\log f(x) - \log f(y)}}{h}
  =
    \frac{\indfn_{LGS}(f)}{h}.
\]
Thus the indicator computes an upper bound on the discrete Lipschitz constant scaled by the characteristic cell size:
  $h K_h \le \indfn_{LGS}(f)$.
\end{remark}

\begin{remark}
In order to compute the indicators \eqref{eq:gs}, \eqref{eq:lgs}, and
\eqref{eq:ldr}, a minimum and/or maximum needs to be computed over a
mesh cell.  In practice this can be done by looping over the nodes or
degrees of freedom of the particular discretization.  Furthermore, it is
sufficient to approximate the discrete local gradient magnitude
\eqref{eq:discrete-grad} by utilizing the routines for gradient computation
that are native to the discretization.

For computing the log-DR indicator in \eqref{eq:ldr} from a discretized $f$
that has been polluted by errors due to finite precision arithmetic or solver
truncations, which are unavoidable in practice, we recommend  computing
$\indfn_{LDR}(f+\epsilon)$ with a small constant $\epsilon>0$.  This ensures
numerically that the argument of $\indfn_{LDR}$ is positive.
\end{remark}

\editTwo{}{
Proposition~\ref{prop:lgs-vs-ldr} shows that the gradient-scale and the log-DR
indicators can be used interchangeably.  Specifically, the simple-to-implement
and computationally cheaper log-DR indicator quantifies features in the solution
equally well as the (slightly) more complex gradient-scale indicator.
To decide which formulation of indicator to use depends on a particular
discretization method and the quantities that are computed in the
implementation.  If, for example, a gradient already needs to be computed, then
the gradient-scale indicator is a cheap byproduct of this computation.  For the
finite difference-based discretization that we consider in this work, the log-DR
indicator presents the computationally cheapest option.  This is why we will
utilize log-DR for our adaptivity criterion in the numerical experiments that
follow in Section~\ref{sec:results}.
}

\subsection{Dynamic AMR and indicator prediction in time}
\label{sec:amr-pred}

This section is concerned with adapting a mesh dynamically to a function
$f=f(t)$ that is evolving in time.
Dynamic mesh adaptivity is a requirement that is imposed by the PDEs we aim to
solve (see Section~\ref{sec:governing-eqns}).  These PDEs are
advection-dominated in some parts of the domain and diffusion-dominated in
other parts.

The mesh can be adapted in between time steps of the implicit time integration
scheme.  To track the variations of the solution as accurately as possible, one
would ideally adapt the mesh in between every time step.  However, this will
increase the computational cost of a parallel implicit solver, because adapting
the mesh induces additional computational costs, such as
redistributing the mesh cells across compute cores, interpolation
between coarse and fine cells, and reallocating and setting up of
solver components.  If the mesh remains unchanged for too many time steps, on
the other hand, accuracy may suffer, and fine-scale features can become
insufficiently resolved because they will migrate from fine mesh cells to
coarse ones.  Therefore dynamic AMR faces two competing constraints:
(i) how long can the time intervals be between adapting the mesh in order to
adequately track the quantity of interest and
(ii) how large is the additional computational cost that each change of the
mesh induces.

The goal in this section is to introduce a new technique to balance these two
constraints.  We want to reduce the frequency of changing the
mesh and at the same time provide sufficient resolution where it is needed
(within a certain time interval).  We also want to avoid overly fine meshes
where resolution is not needed.  It therefore is desired to ``predict''  AMR
indicators in a time evolution simulation.  We will show that such a prediction
is possible in an advection-dominated setting.

We denote $\Delta t$ as the length of the time step of the time evolution
scheme.  We define a number $n_\mathrm{adapt}\in\N_+$ (i.e., positive integer)
that determines after how many time steps $\Delta t$ adaptivity is triggered;
therefore the mesh remains fixed during the interval
  $\Delta t_\mathrm{adapt} \coloneqq n_\mathrm{adapt} \Delta t$.
This implies that within the $n_\mathrm{adapt}$ time steps the features of a
function $f(t)$, which we want to resolve,
will migrate.  This is what we want to address with AMR prediction.
The superscript notation $f^{(A)}(t)$ indicates that $f(t)$ is discretized (in
space) on a particular mesh $A$.

The diagram in Figure~\ref{fig:diagram-amr} illustrates how dynamic AMR is
carried out between time steps without AMR prediction.
For simplicity of the diagram, we set $n_\mathrm{adapt}=1$, and hence
  $\Delta t_\mathrm{adapt} = \Delta t$.
The diagram shows the sequence of mesh adaptivity, where interpolation from
mesh $A$ to mesh $B$ is performed, alternating with the time evolution scheme,
where $f^{(B)}(t)$ evolves to $f^{(B)}(t + \Delta t)$.

With AMR prediction, we aim to avoid a loss of accuracy when fine-scale
features of $f(t)$ migrate away from fine to coarser regions of the mesh.
We propose to predict the ``path of refinement'' by evolving the AMR indicators
``ahead'' of the quantity of interest, $f(t)$, by intervals of at most
$\Delta t_\mathrm{adapt}$.  In effect, we are leapfrogging the AMR
indicators forward in time relative to the (regular) simulation.
Dynamic AMR with prediction is illustrated in the diagram in
Figure~\ref{fig:diagram-amr-prediction}.  For simplicity of the diagram, we set
$n_\mathrm{adapt}=n_\mathrm{pred}=1$, and hence
  $\Delta t_\mathrm{adapt} = \Delta t_\mathrm{pred} = \Delta t$.
First, the indicator $\indfn$ is computed as in the previous case without
prediction; however, $\indfn$ is subsequently evolved forward in time by
$\Delta t_\textrm{pred}$ ahead of the simulation.  To keep the
computational costs low, we propose to use an explicit time integration scheme,
which should be significantly faster to carry out compared with the implicit
scheme used to evolve $f$.  As $\indfn$ evolves, it is simultaneously reduced
into a time-independent indicator $\indfn_\textrm{pred}$.  To adapt
the mesh, we then use
$\indfn_\textrm{pred}$, which predicts the path of adaptivity.

\begin{figure}\centering
  \resizebox{0.98\columnwidth}{!}{
    \tikzset{
  fncSty/.style={font=\footnotesize, align=center},
  meshSty/.style={font=\footnotesize, align=center, draw=darkgray, rounded corners=0.5ex},
  arrowSty/.style={->, >=latex, draw=darkgray, thick},
  arrowLabelSty/.style={font=\scriptsize, align=center},
}
\begin{tikzpicture}
  \def\dh{3.0}
  \def\dv{0.9}

  \node[fncSty]   (fA1) at (-0.5*\dh,-0.0*\dv) {$f^{(A)}(t)$};
  \node[meshSty]  (mA)  at (-0.7*\dh,-1.1*\dv) {mesh $A$};
  \draw[arrowSty] (mA) -- (fA1);

  \node[fncSty]   (iA1) at (0*\dh,-2.6*\dv) {$\indfn^{(A)}(t)$};
  \node[meshSty]  (mB)  at (0.3*\dh,-1.1*\dv) {mesh $B$};
  \draw[arrowSty] (fA1) -- (iA1);
  \draw[arrowSty] (iA1) -- (mB);

  \node[fncSty]   (fB0) at (0.5*\dh,-0*\dv) {$f^{(B)}(t)$};
  \draw[arrowSty] (fA1) -- node[arrowLabelSty,above]{interp.} (fB0);
  \draw[arrowSty] (mB)  -- (fB0);

  \node[fncSty]   (fB1) at (1.5*\dh,-0*\dv) {$f^{(B)}(t + \Delta t)$};
  \draw[arrowSty] (fB0) -- node[arrowLabelSty,above]{evolve} (fB1);

  \node[fncSty]   (iB1) at (2*\dh,-2.6*\dv) {$\indfn^{(B)}(t + \Delta t)$};
  \node[meshSty]  (mC)  at (2.3*\dh,-1.1*\dv) {mesh $C$};
  \draw[arrowSty] (fB1) -- (iB1);
  \draw[arrowSty] (iB1) -- (mC);

  \node[fncSty]   (fC0) at (2.5*\dh,-0*\dv) {$f^{(C)}(t + \Delta t)$};
  \draw[arrowSty] (fB1) -- node[arrowLabelSty,above]{interp.} (fC0);
  \draw[arrowSty] (mC)  -- (fC0);

  \node[fncSty]   (fC1) at (3.5*\dh,-0*\dv) {$f^{(C)}(t + 2\Delta t)$};
  \draw[arrowSty] (fC0) -- node[arrowLabelSty,above]{evolve} (fC1);
\end{tikzpicture}
  }
  \caption{Diagram of \textbf{dynamic AMR without prediction}.
    In the ``interp.'' (interpolation) step, the indicator $\indfn$ is computed
    from $f$, and this indicator is used to adapt the mesh (e.g., from mesh
    $A$ to mesh $B$).
    The ``evolve'' step advances $f$ forward in time by $\Delta
    t_\mathrm{adapt} = \Delta t$.}
  \label{fig:diagram-amr}
  \vskip 8ex
  \resizebox{0.98\columnwidth}{!}{
    \tikzset{
  fncSty/.style={font=\footnotesize, align=center},
  meshSty/.style={font=\footnotesize, align=center, draw=darkgray, rounded corners=0.5ex},
  arrowSty/.style={->, >=latex, draw=darkgray, thick},
  arrowLabelSty/.style={font=\scriptsize, align=center},
}
\begin{tikzpicture}
  \def\dh{3.0}
  \def\dv{0.9}

  \node[fncSty]   (fA1) at (-0.5*\dh,-0.0*\dv) {$f^{(A)}(t)$};
  \node[meshSty]  (mA)  at (-0.7*\dh,-1.1*\dv) {mesh $A$};
  \draw[arrowSty] (mA) -- (fA1);

  \node[fncSty]             (iA1) at (0*\dh,-3*\dv) {$\indfn^{(A)}(t)$};
  \node[fncSty,anchor=west] (iA2) at (1*\dh,-3*\dv) {$\indfn^{(A)}(t + \Delta t)$};
  \draw[arrowSty] (iA1) -- node[arrowLabelSty,below]{evolve (expl.)} (iA2);
  \draw[arrowSty] (fA1) -- (iA1);

  \draw[decoration={brace},decorate] (iA1.north) -- (iA2.north east);
  \node[fncSty]   (pA2) at (0.9*\dh,-2.2*\dv) {$\indfn^{(A)}_\mathrm{pred}(t)$};
  \node[meshSty]  (mB)  at (0.7*\dh,-1.1*\dv) {mesh $B$};
  \draw[arrowSty] (pA2) -- (mB);

  \node[fncSty]   (fB0) at (0.5*\dh,-0*\dv) {$f^{(B)}(t)$};
  \draw[arrowSty] (fA1) -- node[arrowLabelSty,above]{interp.} (fB0);
  \draw[arrowSty] (mB)  -- (fB0);

  \node[fncSty]   (fB1) at (1.5*\dh,-0*\dv) {$f^{(B)}(t + \Delta t)$};
  \draw[arrowSty] (fB0) -- node[arrowLabelSty,above]{evolve}
                           node[arrowLabelSty,below]{(implicit)} (fB1);

  \node[fncSty]             (iB1) at (2*\dh,-3*\dv) {$\indfn^{(B)}(t + \Delta t)$};
  \node[fncSty,anchor=west] (iB2) at (3*\dh,-3*\dv) {$\indfn^{(B)}(t + 2\Delta t)$};
  \draw[arrowSty] (iB1) -- node[arrowLabelSty,below]{evolve (expl.)} (iB2);
  \draw[arrowSty] (fB1) -- (iB1);

  \draw[decoration={brace},decorate] (iB1.north) -- (iB2.north east);
  \node[fncSty]   (pB2) at (2.9*\dh,-2.2*\dv) {$\indfn^{(B)}_\mathrm{pred}(t + \Delta t)$};
  \node[meshSty]  (mC)  at (2.7*\dh,-1.1*\dv) {mesh $C$};
  \draw[arrowSty] (pB2) -- (mC);

  \node[fncSty]   (fC0) at (2.5*\dh,-0*\dv) {$f^{(C)}(t + \Delta t)$};
  \draw[arrowSty] (fB1) -- node[arrowLabelSty,above]{interp.} (fC0);
  \draw[arrowSty] (mC)  -- (fC0);

  \node[fncSty]   (fC1) at (3.5*\dh,-0*\dv) {$f^{(C)}(t + 2\Delta t)$};
  \draw[arrowSty] (fC0) -- node[arrowLabelSty,above]{evolve}
                           node[arrowLabelSty,below]{(implicit)} (fC1);
\end{tikzpicture}
  }
  \caption{Diagram of \textbf{dynamic AMR with prediction}.
    The indicator $\indfn$ is first computed from $f$ and subsequently evolved
    forward in time by $\Delta t_\textrm{pred} = \Delta t$ ahead of the
    simulation, thus predicting the path of AMR.  The evolution of
    $\indfn$ is reduced into one time-independent indicator
    $\indfn_\textrm{pred}$.  In the ``interp.'' (interpolation) step,
    $\indfn_\textrm{pred}$ is used to adapt the mesh (e.g., from mesh $A$ to
    mesh $B$).
    The ``evolve'' step advances $f$ forward in time by $\Delta
    t_\mathrm{adapt} = \Delta t$.}
  \label{fig:diagram-amr-prediction}
\end{figure}

Algorithm~\ref{alg:amr-prediction} describes in more detail how dynamic AMR
with prediction is carried out during the time evolution of a function $f$.
Specifically, the reduction of the leapfrogged indicator $\indfn$ is performed
by taking the maximum of the evolving indicator across all time steps.

\begin{algorithm}
\caption{AMR prediction by leapfrogging AMR indicator.}
\label{alg:amr-prediction}
\begin{algorithmic}[1]
  \Statex \textbf{Given:} Mesh~$A$ and function $f^{(A)}(t)$ supported on
    mesh~$A$ at time $t$;
    time step length $\Delta t>0$,
    number of steps $n_\mathrm{adapt}\in\N_{+}$ for adaptivity intervals, and
    number of steps $n_\mathrm{pred}\in\N_{+}$ for prediction intervals.
  \While {$t$ has not reached final time}
    \State $\Delta t_\mathrm{pred} = n_\mathrm{pred} \Delta t$
    \State $\Delta t_\mathrm{adapt} = n_\mathrm{adapt} \Delta t$
    \State compute indicator $\indfn^{(A)}(t)$ from function $f^{(A)}(t)$ on mesh $A$
    \State leapfrog $\indfn^{(A)}(t)$ until $\indfn^{(A)}(t + \Delta t_\mathrm{pred})$
      \Comment \emph{(``fast'' scheme)}
    \State reduce $\indfn^{(A)}$ to predictive indicator
      $\indfn^{(A)}_\mathrm{pred}(t) =
        \max_{\tau\in[t, t+\Delta t_\mathrm{pred}]} \indfn^{(A)}(\tau)$
    \State create mesh $B$ by adapting mesh $A$ based on $\indfn^{(A)}_\mathrm{pred}(t)$
      \Comment \emph{(AMR)}
    \State interpolate $f^{(A)}(t)$ to $f^{(B)}(t)$ onto mesh $B$
    \State evolve $f^{(B)}(t)$ until $f^{(B)}(t + \Delta t_\mathrm{adapt})$
      \Comment \emph{(``accurate'' scheme)}
    \State update time step length $\Delta t$
      \Comment \emph{(adaptive time stepping)}
  \EndWhile
\end{algorithmic}

\end{algorithm}

In the general setting, as it has been considered in this section, we have not
made assumptions about the time-stepping methods used to evolve $f(t)$ and to
leap-frog $\indfn$.  The choice of the pairing of these two time-stepping
methods is important when considering the computational complexity of the
overall simulation with dynamic AMR and AMR prediction.
In the context of the governing equations we are aiming to solve
(Section~\ref{sec:governing-eqns}), we are dealing with an advection--diffusion PDE.  The PDE
will require implicit time-stepping schemes, which have a significantly higher
computational complexity when compared with explicit schemes.
The leapfrogging of the indicators $\indfn$, on the other hand, has to be
performed only locally in time: for short time intervals and with a new initial
condition at each invocation of AMR prediction.
Therefore we can relax the requirement for accuracy, and we can approximate
local short-time behavior by eliminating the diffusion term of the PDE.  Hence,
the equation for AMR prediction will be an advection-only PDE, and we will use
the same advection coefficient as for the governing equations of the physical
models (see Section~\ref{sec:governing-eqns})

In
Section~\ref{sec:pred0-vs-pred1} we discuss the increase in accuracy of the numerical solution while keeping the
computational costs low, which is achieved due to AMR prediction.

\def\secstatus{finished}

\section{Relavitistic electron drift-kinetic solver based on PETSc and p4est}
\label{sec:solver}

DMBF (Data Management for Block-structured Forest-of-trees) is a new data
management object in PETSc that we have developed and implemented along with a
new relativistic electron drift-kinetic solver.
We first outline the capabilities of the implementation that are incorporated
into PETSc;  subsequently, we give a high-level description of the new
application code for relativistic electrons.

\subsection{DMBF: New PETSc data management for p4est-based AMR solvers}
\label{sec:dmbf}

Our \editAll{}{overall} goal is to discretize a computational domain with locally adaptively
refined quadrilateral meshes in two dimensions.
\editAll{}{In particular for relativistic electrons,} extreme local refinement is
critical for resolving the rapidly changing solution, which can vary 15--20
orders of magnitude, while over regions where the solution is relatively flat,
coarser meshes can help reduce the computational cost.  Therefore, a key
requirement of our solver is adaptive mesh refinement.

The dynamic mesh adaptivity in parallel is, at its lowest level, enabled by the
p4est library \cite{BursteddeWilcoxGhattas11, IsaacBursteddeWilcoxEtAl15}.
This library implements hierarchically refined quadrilateral and hexahedral meshes
utilizing forest-of-octree algorithms and space-filling curves
\cite{sundar2008bottom}.
Mesh refinement and coarsening are efficient in parallel because p4est performs
these tasks locally on each compute core without communication.  The
representation of the mesh as a quadtree/octree topology enables efficient 2:1
mesh balancing in parallel as well as repartitioning of the mesh cells
across compute cores, for both of which communication is necessary.
Space-filling curves transform a two- or three-dimensional space that is subdivided in cells
into a (one-dimensional) sequence of cells.  This sequence is used for an efficient
partitioning of mesh cells in parallel with the desirable property of
keeping spatially neighboring cells nearby each other in the sequence;
hence exhibiting memory locality.
This property of space-filling curves is responsible for keeping the amount of
communication low.
Overall, the algorithms of p4est have demonstrated scalability up to
$O$(100,000)
of processes on distributed-memory CPU-based systems
\cite{BursteddeGhattasGurnisEtAl10, SundarBirosBursteddeEtAl12}; moreover,
scalability to $O$(1,000,000) of CPU cores have been achived for complex
implicit PDE solvers \cite{rudi2015extreme, RudiStadlerGhattas17,
RudiShihStadler20}.
Generally, tree-based AMR algorithms have shown impressive scalability
results \cite{weinzierl2019peano, FernandoNeilsenLimEtAl19}.

DMBF is a new type of data management in PETSc.
It provides interfaces for block-structured forest-of-trees meshes.
The block structure is understood in the sense that leaf elements of the trees
can be equipped with blocks of uniformly refined cells.  DMBF implements
PETSc functions that give direct access to the functionality of p4est with the
least amount of overhead compared with existing PETSc data management approaches.
In addition, DMBF provides cellwise data management for storing any kind
of data that is mapped to the cells of the p4est mesh.
Because DMBF associates cellwise data directly with the p4est mesh, the cell
data managed by DMBF is created and destroyed when p4est cells are refined
and coarsened; and DMBF-managed cell data migrates across processes in parallel
as the p4est mesh is repartitioned.  In the context of discretizations of PDEs,
it is essential that data from neighboring cells be shared, which in a
distributed-memory setting is handled via point-to-point communication between
processes in order to communicate data of a ghost layer.  DMBF supports the
communication of DMBF-managed cell data within a ghost layer.

The focus of DMBF in managing general data objects enables scientific
applications that use the DMBF interface to store any kind of cell-dependent
data, for instance, the data required for finite element, finite volume, or
finite difference discretizations.  The shape and size of the cell data are left
entirely up to the application using DMBF and are not imposed by DMBF's
interface.  This approach allows for greater flexibility, because more
applications can utilize DMBF.  At the same time, it requires the
application to implement its own discretization routines (or rely on other
libraries).
We have taken this approach of separating concerns.  We have implemented a new
relativistic electron drift-kinetic solver that is supported by the DMBF
interfaces (see Section~\ref{ref:rfp-solver}).

To perform computations with the DMBF-managed cell data, the DMBF
interface provides iterator functions that call a user-specified function for
each cell.  Additional functions are capable of iterating over the edges/faces
of cells.  The adaptation of a mesh is handled with similar user-provided
functions that act on the data of each cell individually.
The DMBF interface is completed by allowing the generation of PETSc vectors and
matrices. Therefore, calculations that utilize DMBF leverage the linear and
nonlinear solvers of PETSc as well as PETSc's time-stepping functionality.

\subsection{Relativistic electron drift-kinetic solver based on DMBF}
\label{ref:rfp-solver}

This section gives a brief overview of the relativistic drift-kinetic Fokker--Planck solver
that is
\editAll{built}{a new application code building}
on the DMBF data structure.

The spatial discretization chooses
a finite volume scheme, MUSCL (Monotonic Upstream-centered Scheme for
Conservation Law) [20,21], or a conservative finite difference scheme, QUICK (Quadratic Upstream Interpolation for Convective Kinematics)
[22],
for the advection
operators.  Both schemes are second-order accurate in space
on a uniform mesh for time-dependent advection problems.
The collision operator is discretized by a central scheme using a three-point stencil that
also gives second-order accuracy.
\editOne{}{Note that the second-order central scheme for the collision operator can be interpreted as finite difference or finite volume~\cite{LeVeque02}.
We support both finite difference and finite volume schemes through associating corresponding degrees of freedom to each cell.
}
An adaptive quadtree-based
quadrilateral mesh is used. Thus our solver supports the handling of hanging nodes and can be nonsymmetric in a single cell.
More details on the AMR and the implementation can be found
in Section~\ref{sec:dmbf}.

Numerical solutions
\editAll{are always assumed to live on cell centers.}
{throughout the current paper are supported on nodes at cell centers.}
Each cell of
the adapted mesh is further subdiveded into four uniform cells, which
constitute the degrees of freedom (DOFs) of a solution's discretization.
To distinguish between the two notions of cells, we use the term \emph{mesh
cells},
\editAll{for}{denoting}
the cells of the adapted p4est-based mesh, and we use the term
\editOne{\emph{FV cells}}{\emph{cells}}
for the four uniformly refined cells inside a mesh cell.
Additional to the
\editOne{FV cells}{cells}
inside each mesh cell, the numerical schemes need to
access DOFs from neighboring
\editOne{FV cells}{cells}.
To this end, the DMBF cell data includes
a layer---also referred to as guard layer---of two
\editOne{FV cells}{cells}
around a mesh cell.
This outer layer of DOFs \editAll{can require}{requires}
point-to-point communication%
\editAll{ to be filled with data from other processes.}
{, if neighboring cells reside on different parallel processes.}

\editOne{}{
The size of the guard layer is determined from the required DOFs of a numerical
scheme.  Here, we employ the second-order QUICK and MUSCL schemes, which need
DOFs from up to two neighboring cells.  Hence, each mesh cell has a guard layer
of two (discretization) cells in each of the horizontal $p$- and vertical
$\xi$-directions.
Since we choose the adaptive mesh to be block-structured by incorporating four
cells of uniform refinement into each mesh cell, the parallel communication of
ghost cells is feasible with standard point-to-point communication of adjacent
neighbors of mesh cells (as opposed to needing ghost layers over two mesh
cells).
Filling a guard layer, if the neighboring mesh cells have the same level of
refinement, is straightforward:  assuming a ghost layer communication has taken
place, the values of the DOFs from neighbors can be copied into the guard layer
of a mesh cell; this is depicted in Figure~\ref{fig:guard-layer-uniform}.  When
the level of refinement differs, then it will differ only by one level, because
we enforce 2:1-balancing supported by p4est.  This leaves two cases to be
discussed: when a guard layer needs to be filled with DOFs from one coarser mesh
cell or from two mesh cells of one level finer.  We will present the example of
the left edge of a mesh cell, while all other edges are treated analogously.
}

\editOne{}{
First, the interpolation from one coarse mesh cell to the guard layers of two
fine mesh cells involves the DOFs shown in Figure~\ref{fig:guard-layer-adaptive},
left, where the figure depicts the coarse--fine interface along the left edge of
two fine mesh cells.  The guard layers' (fine) DOFs have coordinates that are
horizontally aligned with the coarse DOFs; this allows to interpolate only along
the vertical $\xi$-dimension.
We employ linear projections, because they are more stable than higher-order
projections with respect to extrapolation, which affects the fine DOFs at the
top and bottom of the guard layer (see Figure~\ref{fig:guard-layer-adaptive},
left); and we enforce nonnegativity after projection.
Second, the interpolation from two fine mesh cells to the guard layer of one
coarse mesh cell is depicted in Figure~\ref{fig:guard-layer-adaptive}, right, for
the fine--coarse interface along the left edge of a mesh cell.  The guard
layer's (coarse) DOFs are horizontally aligned with the fine DOFs (i.e.,
similarly to the previously presented fine guard layers).  A linear
interpolation only along the $\xi$-direction is necessary; and no extrapolation
is performed, because coarse DOFs will always be surrounded by fine DOFs.
}

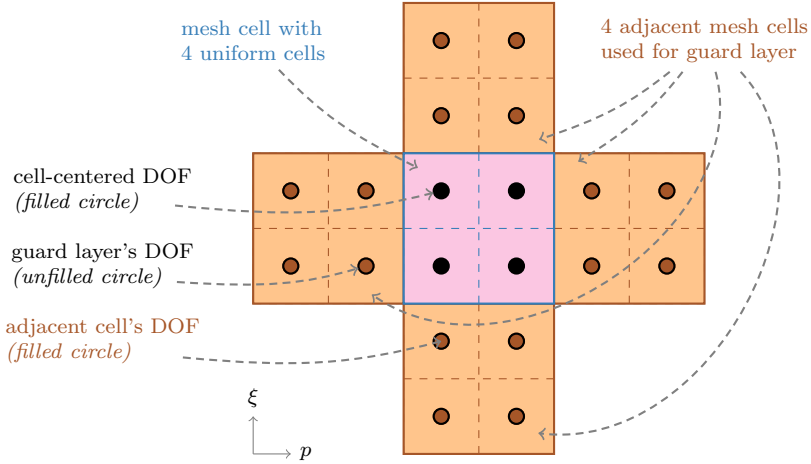
\begin{figure}\centering
    \def\colorCellBndr{jr@blue}
\def\colorCellFill{jr@pink!45}
\def\colorCellNode{black}
\def\colorGuardBndr{jr@brown}
\def\colorGuardFill{jr@orange!45}
\def\colorGuardNode{jr@brown}
\def\tikzscale{1.0}
\tikzset{
  cellSty/.style={thick},
  gridSty/.style={thin, dashed},
  nodeSty/.style={thick},
  axisSty/.style={->, gray},
  labelSty/.style={black, font=\footnotesize, align=left},
  annotateCellSty/.style={->, thick, gray, densely dashed, shorten >=-6pt},
  annotateNodeSty/.style={->, thick, gray, densely dashed, shorten <=-10pt},
  cellPic/.pic={
    \draw[fill=\colorCellFill            ] (0,0) rectangle +(2,2);
    \draw[gridSty, \colorCellBndr, step=1] (0,0) grid      +(2,2);
    \draw[cellSty, \colorCellBndr        ] (0,0) rectangle +(2,2);
    \draw[nodeSty, \colorCellNode, fill=\colorCellNode] ( 0.5,0.5) circle [radius=0.1];
    \draw[nodeSty, \colorCellNode, fill=\colorCellNode] ( 1.5,0.5) circle [radius=0.1];
    \draw[nodeSty, \colorCellNode, fill=\colorCellNode] ( 0.5,1.5) circle [radius=0.1];
    \draw[nodeSty, \colorCellNode, fill=\colorCellNode] ( 1.5,1.5) circle [radius=0.1];
  },
  guardPic/.pic={
    \draw[fill=\colorGuardFill            ] (0,0) rectangle +(2,2);
    \draw[gridSty, \colorGuardBndr, step=1] (0,0) grid      +(2,2);
    \draw[cellSty, \colorGuardBndr        ] (0,0) rectangle +(2,2);
    \draw[nodeSty, \colorGuardNode, fill=\colorGuardNode] (0.5,0.5) circle [radius=0.1];
    \draw[nodeSty, \colorGuardNode, fill=\colorGuardNode] (1.5,0.5) circle [radius=0.1];
    \draw[nodeSty, \colorGuardNode, fill=\colorGuardNode] (0.5,1.5) circle [radius=0.1];
    \draw[nodeSty, \colorGuardNode, fill=\colorGuardNode] (1.5,1.5) circle [radius=0.1];
    \draw[nodeSty, \colorCellNode,                      ] (0.5,0.5) circle [radius=0.1];
    \draw[nodeSty, \colorCellNode,                      ] (1.5,0.5) circle [radius=0.1];
    \draw[nodeSty, \colorCellNode,                      ] (0.5,1.5) circle [radius=0.1];
    \draw[nodeSty, \colorCellNode,                      ] (1.5,1.5) circle [radius=0.1];
  }
}
\begin{tikzpicture}[scale=\tikzscale]
  \begin{scope}[shift={(-2,-2)}]
    \draw[axisSty] (0,0) -- (0.5, 0) node[labelSty, anchor=west ] {$p$};
    \draw[axisSty] (0,0) -- (0, 0.5) node[labelSty, anchor=south] {$\xi$};
  \end{scope}

  \begin{scope}[shift={(-2,0)}]
    \pic[scale=\tikzscale] {guardPic};
  \end{scope}

  \begin{scope}[shift={(+2,0)}]
    \pic[scale=\tikzscale] {guardPic};
  \end{scope}

  \begin{scope}[shift={(0,-2)}]
    \pic[scale=\tikzscale] {guardPic};
  \end{scope}

  \begin{scope}[shift={(0,+2)}]
    \pic[scale=\tikzscale] {guardPic};
  \end{scope}

  \pic[scale=\tikzscale] {cellPic};

  \node[labelSty, \colorCellBndr] at (-2,+3.5) {mesh cell with\\4 uniform cells}
    edge[annotateCellSty, bend right=10] (0,1.9);
  \node[labelSty, \colorGuardBndr] at (+4,+3.5) {4 adjacent mesh cells\\used for guard layer}
    edge[annotateCellSty, bend left=10] (2,2.25)
    edge[annotateCellSty, bend left=10] (2.5,2)
    edge[annotateCellSty, bend left=70] (2,-1.75)
    edge[annotateCellSty, bend left=70] (-0.25,0);
  \node[labelSty, \colorCellNode] at (-4,+1.5) {cell-centered DOF\\\emph{(filled circle)}}
    edge[annotateNodeSty, bend right=10, shorten >=3pt] (0.5,1.5);
  \node[labelSty, \colorCellNode] at (-4,+0.5) {guard layer's DOF\\\emph{(unfilled circle)}}
    edge[annotateNodeSty, bend right=10, shorten >=3pt] (-0.5,0.5);
  \node[labelSty, \colorGuardNode] at (-4,-0.5) {adjacent cell's DOF\\\emph{(filled circle)}}
    edge[annotateNodeSty, bend right=10, shorten >=0pt] (0.5,-0.5);

\end{tikzpicture}
  \caption{\editOne{}{%
    A mesh cell \emph{(blue square)} is subdivided into four cells with one
    cell-centered DOF \emph{(back filled circle)}.  The guard layer of this mesh
    cell is comprised of four adjacent mesh cells \emph{(brown squares)}, if the
    neighboring cells have the same level of refinement.
    Filling the guard layer's DOFs \emph{(black unfilled circles)} of the
    \emph{(blue)} mesh cell is done by copying the DOFs from adjacent cells
    \emph{(brown filled circles)} in the case of uniform level of refinement.
    }}
  \label{fig:guard-layer-uniform}
\end{figure}

\begin{figure}\centering
    \def\colorCellBndr{jr@blue}
\def\colorCellFill{jr@pink!45}
\def\colorCellNode{black}
\def\colorGuardBndr{jr@brown}
\def\colorGuardFill{jr@orange!45}
\def\colorGuardNode{jr@brown}
\def\tikzscale{1.0}
\tikzset{
  cellSty/.style={thick},
  gridSty/.style={thin, dashed},
  nodeSty/.style={thick},
  axisSty/.style={->, gray},
  labelSty/.style={black, font=\footnotesize, align=left},
  titleSty/.style={black, font=\footnotesize\bfseries, align=center},
}
\begin{tikzpicture}[scale=\tikzscale]
  %
  %
  \begin{scope}[shift={(-3.5,0)}]
  \draw[axisSty] (0,0) -- (0.5, 0) node[labelSty, anchor=west ] {$p$};
  \draw[axisSty] (0,0) -- (0, 0.5) node[labelSty, anchor=south] {$\xi$};
  \end{scope}

  %
  %
  \begin{scope}
  \node[titleSty] at (-0.5,2.4) {Filling a guard layer with\\coarse-to-fine interpolation};

  \draw[fill=\colorGuardFill            ] (-2,0) rectangle +(2,2);
  \draw[gridSty, \colorGuardBndr, step=1] (-2,0) grid      +(2,2);
  \draw[cellSty, \colorGuardBndr        ] (-2,0) rectangle +(2,2);
    \draw[nodeSty, \colorGuardNode, fill=\colorGuardNode] (-1.5,0.5) circle [radius=0.1];
    \draw[nodeSty, \colorGuardNode, fill=\colorGuardNode] (-0.5,0.5) circle [radius=0.1];
    \draw[nodeSty, \colorGuardNode, fill=\colorGuardNode] (-1.5,1.5) circle [radius=0.1];
    \draw[nodeSty, \colorGuardNode, fill=\colorGuardNode] (-0.5,1.5) circle [radius=0.1];
    \draw[nodeSty, \colorCellNode,                      ] (-1.5,0.25) circle [radius=0.05];
    \draw[nodeSty, \colorCellNode,                      ] (-0.5,0.25) circle [radius=0.05];
    \draw[nodeSty, \colorCellNode,                      ] (-1.5,0.75) circle [radius=0.05];
    \draw[nodeSty, \colorCellNode,                      ] (-0.5,0.75) circle [radius=0.05];
    \draw[nodeSty, \colorCellNode,                      ] (-1.5,1.25) circle [radius=0.05];
    \draw[nodeSty, \colorCellNode,                      ] (-0.5,1.25) circle [radius=0.05];
    \draw[nodeSty, \colorCellNode,                      ] (-1.5,1.75) circle [radius=0.05];
    \draw[nodeSty, \colorCellNode,                      ] (-0.5,1.75) circle [radius=0.05];

  \draw[fill=\colorCellFill              ] (0,0) rectangle +(1,1);
  \draw[gridSty, \colorCellBndr, step=0.5] (0,0) grid      +(1,1);
  \draw[cellSty, \colorCellBndr          ] (0,0) rectangle +(1,1);
    \draw[nodeSty, \colorCellNode, fill=\colorCellNode] (0.25,0.25) circle [radius=0.05];
    \draw[nodeSty, \colorCellNode, fill=\colorCellNode] (0.75,0.25) circle [radius=0.05];
    \draw[nodeSty, \colorCellNode, fill=\colorCellNode] (0.25,0.75) circle [radius=0.05];
    \draw[nodeSty, \colorCellNode, fill=\colorCellNode] (0.75,0.75) circle [radius=0.05];
  \draw[fill=\colorCellFill              ] (0,1) rectangle +(1,1);
  \draw[gridSty, \colorCellBndr, step=0.5] (0,1) grid      +(1,1);
  \draw[cellSty, \colorCellBndr          ] (0,1) rectangle +(1,1);
    \draw[nodeSty, \colorCellNode, fill=\colorCellNode] (0.25,1.25) circle [radius=0.05];
    \draw[nodeSty, \colorCellNode, fill=\colorCellNode] (0.75,1.25) circle [radius=0.05];
    \draw[nodeSty, \colorCellNode, fill=\colorCellNode] (0.25,1.75) circle [radius=0.05];
    \draw[nodeSty, \colorCellNode, fill=\colorCellNode] (0.75,1.75) circle [radius=0.05];
  \end{scope}

  %
  %
  \begin{scope}[shift={(4,0)}]
  \node[titleSty] at (+0.5,2.4) {Filling a guard layer with\\fine-to-coarse interpolation};

  \draw[fill=\colorGuardFill              ] (-1,0) rectangle +(1,1);
  \draw[gridSty, \colorGuardBndr, step=0.5] (-1,0) grid      +(1,1);
  \draw[cellSty, \colorGuardBndr          ] (-1,0) rectangle +(1,1);
    \draw[nodeSty, \colorGuardNode, fill=\colorGuardNode] (-0.75,0.25) circle [radius=0.05];
    \draw[nodeSty, \colorGuardNode, fill=\colorGuardNode] (-0.25,0.25) circle [radius=0.05];
    \draw[nodeSty, \colorGuardNode, fill=\colorGuardNode] (-0.75,0.75) circle [radius=0.05];
    \draw[nodeSty, \colorGuardNode, fill=\colorGuardNode] (-0.25,0.75) circle [radius=0.05];
  \draw[fill=\colorGuardFill              ] (-1,1) rectangle +(1,1);
  \draw[gridSty, \colorGuardBndr, step=0.5] (-1,1) grid      +(1,1);
  \draw[cellSty, \colorGuardBndr          ] (-1,1) rectangle +(1,1);
    \draw[nodeSty, \colorGuardNode, fill=\colorGuardNode] (-0.75,1.25) circle [radius=0.05];
    \draw[nodeSty, \colorGuardNode, fill=\colorGuardNode] (-0.25,1.25) circle [radius=0.05];
    \draw[nodeSty, \colorGuardNode, fill=\colorGuardNode] (-0.75,1.75) circle [radius=0.05];
    \draw[nodeSty, \colorGuardNode, fill=\colorGuardNode] (-0.25,1.75) circle [radius=0.05];

  \draw[nodeSty, \colorCellNode,                      ] (-0.75,0.5) circle [radius=0.1];
  \draw[nodeSty, \colorCellNode,                      ] (-0.25,0.5) circle [radius=0.1];
  \draw[nodeSty, \colorCellNode,                      ] (-0.75,1.5) circle [radius=0.1];
  \draw[nodeSty, \colorCellNode,                      ] (-0.25,1.5) circle [radius=0.1];

  \draw[fill=\colorCellFill            ] (0,0) rectangle +(2,2);
  \draw[gridSty, \colorCellBndr, step=1] (0,0) grid      +(2,2);
  \draw[cellSty, \colorCellBndr        ] (0,0) rectangle +(2,2);
    \draw[nodeSty, \colorCellNode, fill=\colorCellNode] ( 0.5,0.5) circle [radius=0.1];
    \draw[nodeSty, \colorCellNode, fill=\colorCellNode] ( 1.5,0.5) circle [radius=0.1];
    \draw[nodeSty, \colorCellNode, fill=\colorCellNode] ( 0.5,1.5) circle [radius=0.1];
    \draw[nodeSty, \colorCellNode, fill=\colorCellNode] ( 1.5,1.5) circle [radius=0.1];
  \end{scope}

\end{tikzpicture}
  \caption{\editOne{}{%
    Filling a guard layer's DOFs \emph{(black unfilled circles)} in the case of
    adaptive refinement requires interpolating the DOFs of the adjacent mesh
    cell(s) \emph{(brown filled circles)} to the locations at the guard layer's
    DOFs, while the latter are positioned to resemble a stretched, uniformly
    refined grid for the DOF of the mesh cell(s) \emph{(black filled circles)}.
    }}
  \label{fig:guard-layer-adaptive}
\end{figure}
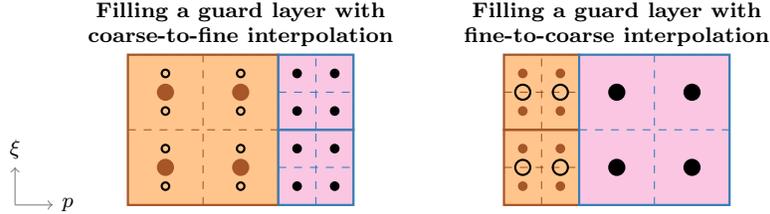

The spatial discretization is coupled with explicit or implicit time stepping.
To ease the mesh adaptivity, we choose single-step integrators.
For the explicit case a SSP Runge--Kutta is chosen, while for the implicit case
a fully implicit DIRK or ESDIRK time integrator is chosen.
Here the explicit integrators are chosen only for testing purposes.
All the production runs use implicit integrators because we are interested in
physics of the collisional time scale.

Depending on the choice of the advection scheme, the problem can be nonlinear
(in the case of MUSCL) or linear (in the case of QUICK).  For nonlinear problems, the
JFNK algorithm is the primary solver, and it is further preconditioned with an
approximate Jacobian.  The Jacobian is computed through finite difference
coloring provided by PETSc.  A GMRES iterative solver is used to invert the
linearized system (and also the linear system arising from QUICK).
A hypre \cite{FalgoutYang02} algebraic multigirid (BoomerAMG
\cite{yang2002boomeramg}) solver is used as the preconditioner.  Those choices
of the algorithm are state of the art for large-scale nonlinear simulations.

Two commonly used operators on adaptive meshes are interpolations and
integrations, both of which need to support handling of hanging nodes/edges.  For
interpolation from coarse to fine and reverse, we use tensor product
interpolation matrices because of their lower memory usage per generated Flops.
When needed, a postprocessing step is performed after interpolation to
preserve the positivity and mass conservation of the algorithm.
Computations of fluxes across hanging edges require tailored algorithms for
filling the guard cells; these are derived from the interpolation algorithms
and inherit their properties (tensor product formulation, positivity, and mass
conservation).
A line integral along $\xi$ direction is performed to compute runaway
electrons.  A simple quadrature rule is used in each cell, and the positivity
can be enforced at the price of reducing the spatial accuracy to
first-order.

\def\secstatus{finished}

\section{Experimental setup}
\label{sec:setup}

This section focuses on
  the derivation of manufactured solutions,
  the setup of numerical experiments of relativistic electron drift-kinetic simulations, and
  the experimental hardware and software environment.
The goal of this section is to ensure the reproducibility of the work and to
introduce the computational studies in the numerical results of
Section~\ref{sec:results}.

\subsection{Manufactured solution}\label{sec:solutions}

To verify the accuracy of our implementation, we rely on the method of manufactured solutions. %
Several exact solutions are constructed for the RFP equation~\eqref{eqn:rfp}.
Consider the original RFP model for the distribution function $f$,
\begin{align}
  \frac{\partial f}{\partial t} - E \xi\frac{\partial f}{\partial p}
   - E \frac{1-\xi^2}{p} \frac{\partial f}{\partial \xi} = C(f),
\end{align}
where $C(f)$ represents a general collision operator. To determine an exact solution,
we convert the RFP equation back to the coordinates of
$(p_\parallel,\mathbf{p}_\perp)$, giving
\begin{align}
  \frac{\partial f}{\partial t} - E \frac{\partial f}{\partial p_\parallel} = C(f).
  \label{eqn:rfp2}
\end{align}
Note that without the collision operator the exact solution to~\eqref{eqn:rfp2} satisfies the following identity  between $t=T$ and $t=0$:
\begin{align}
 f(T, \editOne{\textit{p}}{p_\parallel}, \mathbf{p}_\perp) = f(0, p_\parallel + ET, \mathbf{p}_\perp).
\end{align}
Therefore, exact solutions can be constructed by following characteristics. For instance, the exact solutions to the pure advection equation can be
\begin{itemize}
\item exponential solutions, such as
\begin{align*}
 f(t, p, \xi) &= \exp\Big[-p_\perp^2-(p_\parallel+Et)^2\Big] \nonumber\\
 &=\exp\Big[-p^2-2p\xi \,E t -(Et)^2\Big].
\end{align*}
 (we note that this exact solution decays properly at $p_{\max}$), or
\item sinusoidal solutions, such as
\begin{align*}
 f(t, p, \xi) =\sin(p\xi +Et), \qquad
\text{or}
\qquad
 f(t, p, \xi) = \cos(p\xi +Et)^2.
\end{align*}
\end{itemize}
Now we consider the RFP equation with a simplified collision operator:
\begin{align*}
  \frac{\partial f}{\partial t} - E \xi\frac{\partial f}{\partial p}
   - E \frac{1-\xi^2}{p} \frac{\partial f}{\partial \xi} =  \frac{\epsilon}{p^2} \frac{\partial}{\partial p}\Big[p^2 \frac{\partial f}{\partial p} \Big] + \frac{\epsilon}{p^2}\frac{\partial }{\partial \xi}\Big[(1-\xi^2)\frac{\partial f }{\partial \xi}\Big].
\end{align*}
One nontrivial exact solution can be derived as
\begin{align}
 f(t, p, \xi)=\sin(p\xi+Et) \exp(-\epsilon t)
\label{eqn:exactCollisionMMS}
\end{align}
through separation of variables~\cite{banks2017stable}.
Note that this collision operator corresponds to a special case of $C_F=0$, $C_A=C_B=\epsilon$ in the general Fokker--Planck collision operator.
These exact solutions can be used to verify numerical solutions with or without collision operators.

\paragraph{Boundary conditions associated with the manufactured solutions}
For practical problems, the RFP equation is discretized in the domain of $[p_{\min}, p_{\max}]\times[-1, 1]$ with $p_{\min}>0$ to avoid the singularity of the factor $1/p$.
We note, however, that the cited exact solutions are valid in the entire domain of $[0, +\infty)\times[-1, 1]$.
Therefore, the time-dependent Dirichlet boundary condition is used in the $p$ direction at both $p_{\min}$ and $p_{\max}$.
Because of the coefficients involving the term $1-\xi^2$, no boundary conditions are needed at $\xi=\pm 1$. Alternatively, one can enforce a Dirichlet boundary condition at $\xi=\pm 1$ for testing purposes.

\subsection{Setup of relativistic electron drift-kinetic simulations}

The RFP model considered in the current work has rich physics that are
often used to describe runaway electrons, which are one of the major sources of
tokamak disruptions.  The current work focuses on examining the capabilities
and performance of the proposed RFP solver;  further advancement of
physics studies will be carried out in follow-up work.
This section lays out details of the RFP model in order to introduce the
numerical results in the next section. It is of interest to numerically analyze
the linear and nonlinear solvers in practical physics regimes, examine the
efficiency gained by AMR, and demonstrate the parallel scalability of RFP
simulations.

The RFP model is considered in a domain of $[0.3, 60]\times[-1, 1]$ for
$[p_\mathrm{min},p_\mathrm{max}]\times[\xi_\mathrm{min},\xi_\mathrm{max}]$.
The model's dynamics are  dominated primarily by the interplay of the electric
field, the damping term, and the Fokker--Planck collision operator.
We aim to demonstrate in which regime the solvers and the preconditioners
converge robustly. It is well known that an algebraic multigrid preconditioner
deteriorates in efficacy when the advection--diffusion operator becomes
increasingly nonsymmetric~\cite{manteuffel2018nonsymmetric}.  In the RFP model,
this is possible when the electric field $E$ becomes large.  On the other hand,
there is a well-known threshold that $E$ has to surpass for a runaway
avalanche to happen~\cite{mcdevitt2018relation}.
The solver is desired to be robust at or above this threshold for $E$.

Additionally, we are concerned with the numerical performance in the presence
of the radiation damping term, $R(f)$ in~\eqref{eqn:rfp}.  This  term is close to a friction force, which is effectively another
advection term, and it mainly impacts the high-energy region (large $p$).
This operator interplays with the electric field in the moderately high energy
region, and thus it is critical for the forming of the runaway tail.
We are also interested in determining the range of $\alpha$, which is the
scaling factor of $R(f)$ in~\eqref{eqn:rfp}, where the solver exhibits a robust
performance.

Another important value of interest is the runaway electron population along the $p$-direction. This is computed through a line integration of the distribution along the $\xi$ direction with a metric that will be defined explicitly in the numerical section.
Note that computing a line integral over an adaptively refined mesh becomes a nontrivial task.
In this work we  focus only on the time evolution of this important population.
One immediate next goal would be to couple this population with the field solver so that it leads to a self-consistent model that is capable of describing the impact of the runaway electrons on the tokamak field.

Furthermore, we are interested in evolving the runaway electron distribution under the impact of the secondary knock-on source as described in Section~\ref{sec:knock_on}. Recall that we have shown
that the dominant term of the Chiu knock-on source~\eqref{eqn:chiu_full} is nonzero in a very thin region.
The major difficulty in evaluating~\eqref{eqn:chiu_full} is thus twofold:
(i) the thin region needs to be sufficiently well resolved; and
(ii) the evaluation of the source terms requires another line integral along
  $\xi$ direction, and in a parallel context the resulting integral needs to
  be communicated to each process participating in the distributed parallel
  simulation.
To address (i), we rely on AMR to resolve such a region dynamically in
time.  Therefore, this case demonstrates another aspect for the need of an
adaptive solver.
To address (ii), we design a positivity-preserving interpolation
approach for performing the line integral on an octree-based adaptively refined
mesh, and we implement MPI communication routines to distribute the integral
information across processes.

Unless otherwise noted, adaptivity criteria for mesh refinement and coarsening
are carried out with AMR indicator prediction as proposed in
Section~\ref{sec:amr-pred}.  Other details of AMR will be described in each
numerical example.

\subsection{Hardware and software environments}
\label{sec:hw-sw}

The hardware and software environments are presented in this section for the
purpose of reproducibility of numerical experiments.
The development of the solvers and the studies of the algorithmic performance
were carried out on three different HPC platforms:  Argonne
National Laboratory's Bebop cluster, Los Alamos National Laboratory's Chicoma system, and the
Cori system at the National Energy Research Scientific Computing Center.
The computations pertaining to performance, specifically to demonstrate the
parallel scalability of the solvers, were carried out on the Frontera system at
the Texas Advanced Computing Center (TACC).  Table~\ref{tab:frontera} lists the
hardware specifications of Frontera.
The software environment consists of Intel C/C++ compilers, the Intel MPI
(Message Passing Interface) library, and Intel MKL, as well as the libraries PETSc,
p4est, and hypre.  The versions of the software  utilized for parallel
scalability studies are listed at the bottom of Table~\ref{tab:frontera}.
The PETSc library has been extended with additional code that implements the
data management functionalities for adaptive meshes based on p4est. This work
relies on these extensions (currently residing in a public branch of the PETSc
repository\footnote{%
  \editTwo{}{Link to DMBF code (branch in PETSc repository):}
  \url{https://gitlab.com/petsc/petsc/-/tree/johann/jcp2023/}
}), and they will be integrated into a future release version of PETSc.

\begin{table}\centering
  \caption{Hardware specifications of the TACC Frontera system based on the CPU
    Intel Xeon Platinum 8280 (``Cascade Lake'') and the Mellanox HDR
    interconnect technology. The bottom rows list the software environment
    utilized for our parallel scalability studies.}
  \label{tab:frontera}
  \centering
  \footnotesize
\begin{tabular}[t]{lr}
  \toprule
  \textbf{Racks (total)}   &     101 \\
  \textbf{Nodes (total)}   &   8,368 \\
  \textbf{Cores (total)}   & 468,608 \\
  \midrule
  \textbf{CPUs per node}             &       2 \\
  \textbf{CPU cores per node}        &      56 \\
  \textbf{Hardware threads per core} &       1 \\
  \textbf{CPU clock rate (nominal)}  & 2.7~GHz \\
  \textbf{Memory per node}           &  192~GB \\
  \midrule
  \textbf{C/C++ compiler}     & \multicolumn{1}{l}{Intel 19.1.1}     \\
  \textbf{MPI library}        & \multicolumn{1}{l}{Intel MPI 19.0.9} \\
  \textbf{BLAS library}       & \multicolumn{1}{l}{Intel MKL 2020.1} \\
  \textbf{PETSc base version} & \multicolumn{1}{l}{3.16.4} \\
  \textbf{hypre version}      & \multicolumn{1}{l}{2.23.0} \\
  \textbf{p4est version}      & \multicolumn{1}{l}{2.0.7}  \\
  \bottomrule
\end{tabular}

\end{table}

\def\secstatus{finished}

\section{Numerical results}
\label{sec:results}

Numerical experiments were performed in order to demonstrate the convergence of the
AMR-based numerical schemes and solvers, the efficiency gained by AMR with
prediction, the robustness of the solvers for practical physics regimes, and
the algorithmic as well as parallel scalability of RFP simulations.

\subsection{Manufactured solution results: Convergence of algorithms}
\label{sec:manufactured-results}

This section employs the manufactured solution derived in
Section~\ref{sec:solutions}  to demonstrate a convergence analysis of
the overall solver, including the numerical scheme and handling of the
{finite volume discretization} %
at hanging faces of neighboring cells with different
levels of refinement.

The manufactured solution is approximated in the domain $[0.3, 60]\times[-1, 1]$ for
$[p_\mathrm{min},p_\mathrm{max}]\times[\xi_\mathrm{min},\xi_\mathrm{max}]$.
\editOne{The RFP model parameters used in}%
{The parameter $E=0.5$ is used in the model}~\eqref{eqn:exactCollisionMMS};
\editOne{are $E=0.5$, $\alpha=0.05$}{}
the simulation of the manufactured solution takes place in the
time interval $t=0,\ldots,10$; and the MUSCL scheme is used.  We  utilize
the 3rd-order explicit Runge--Kutta time integrator of PETSc with a constant
time step length \editAll{}{(varying sizes determined by spatial refinement)}.
The range of levels of mesh refinement, in between which our AMR
refinement criteria from Section~\ref{sec:amr-pred} are adapting the mesh
dynamically, is varied.  We prescribe the range of levels to be three and shift the
minimum and maximum permitted levels up by one in each simulation setup.
Consequently, the coarsest setup has refinement levels $(2,3,4)$; the next finer
setup has levels $(3,4,5)$,
\editBOne{then $(4,5,6)$, and finally $(5,6,7)$}
{etc.; and finally we reach levels $(7,8,9)$}.
Table~\ref{tab:manufactured} lists the number of
\editOne{FV cells}{cells}
that are generated by the AMR algorithm with
prediction (see Section~\ref{sec:amr-pred}) at each refinement level.

The increasingly finer mesh resolution requires a shorter time step length that
is selected to satisfy the CFL condition; see column $\Delta t$ in
Table~\ref{tab:manufactured}.  Because of the changing time step length, we
also are adjusting the frequency of time steps at which mesh refinement is
performed; the corresponding column in Table~\ref{tab:manufactured} is called
\emph{Refine freq.}  The refinement frequency determines after how many time
steps the mesh is updated to track the dynamically changing solution.
\editBOne{}{We increase the refinement frequency proportionally to the number of
time steps, hence the total number of AMR operations stays constant across all
experiments.}

\begin{table}\centering
  \caption{Numerical results pertaining to the manufactured
    solution~\eqref{eqn:exactCollisionMMS}%
    \editBOne{.  Four simulations are shown with increasing levels of refinement, with the
    total number of dynamically adapted cells increasing by factors of $\sim$3
    and the relative error to manufactured solution decreasing by a factor of
    $\sim$1/2.}{, where rows show simulations with
    increasing levels of refinement. An order of convergence of $\sim$2 is
    asymptotically reached.} }
  \label{tab:manufactured}
  \centering
  \scriptsize
  \setlength{\tabcolsep}{0.2em}  
\begin{tabular}[t]{rrrrrrrrrlrrrr}
  \toprule
      \multicolumn{8}{c}{\thead{\editOne{FV cells}{Number of cells} (per refinement level)}}
    & \thead{\#Cells}
    & \thead{$\Delta t$}
    & \thead{Time}
    & \thead{Refine}
    & \thead{Error}
    & \thead{\editBOne{}{Conv.}}
  \\[-2ex]
      level\,2 & level\,3 & level\,4 &
      level\,5 & level\,6 & level\,7 &
      level\,8 & level\,9
    & total
    &
    & \thead{steps}
    & \thead{freq.}
    &
    & \thead{\editBOne{}{order}}
  \\
  \midrule
   8&324&1,648&   --&    --&     --&     --&     --&  1,980 & 0.08      &   125&  16 & \num{6.90}{-1} &   -- \\
  --&164&1,304&4,448&    --&     --&     --&     --&  5,916 & 0.04      &   250&  32 & \num{4.55}{-1} & 0.60 \\
  --& --&1,296&3,520&14,336&     --&     --&     --& 19,152 & 0.01      & 1,000& 128 & \num{2.90}{-1} & 0.65 \\
  --& --&   --&7,268& 8,056& 48,096&     --&     --& 63,420 & 0.0025    & 4,000& 512 & \num{9.92}{-2} & 1.55 \\
  --& --&   --&   --&33,832& 20,452&163,312&     --&217,596 & 0.000625  &16,000&2048 & \num{2.80}{-2} & 1.82 \\
  --& --&   --&   --&    --&146,276& 57,632&574,784&778,692 & 0.00015625&64,000&8192 & \num{7.05}{-3} & 1.99 \\
  \bottomrule
\end{tabular}

\end{table}

\editBOne{The convergence of the numerical solution, expressed in the error to
the known exact solution~\eqref{eqn:exactCollisionMMS}, is listed in the
rightmost column of Table~\ref{tab:manufactured} denoted \emph{Error}.
The errors decrease with each additional level of refinement by a factor of 1/2
or less.  Simultaneously the number of total cells is increasing by a factor of
$\sim$3, as opposed to a factor of 4 if the refinement was performed uniformly.}
{The errors of the numerical solution relative to the known exact
solution~\eqref{eqn:exactCollisionMMS} and the order of convergence of these
errors are listed in the two rightmost columns of Table~\ref{tab:manufactured}.
The error initially decreases with an order that is around 0.6 at coarser levels
of refinement (first three rows of Table~\ref{tab:manufactured}).  As the levels
of refinement increase, the order of convergence also increases, and it reaches
a value of around two.  From this we observe an asymptotic order of convergence
of $\sim$2 for our numerical scheme with adaptive meshes.  The lower order of
error reduction that is present at coarser meshes is likely due to the specific
function that represents the manufactured solution.  A sufficiently refined mesh
may be necessary to capture certain oscillations of that sinusoidal function
before the desired asymptotic order of convergence can be observed.
}

\subsection{AMR prediction results: Higher accuracy with low computational overhead}
\label{sec:pred0-vs-pred1}

This section demonstrates numerically the benefit of adding the proposed
prediction capabilities to AMR as described in Section~\ref{sec:amr-pred}.  The
benefit is that the solution function is resolved with increased accuracy with
a comparatively small increase in computational costs.
We focus on an experimental setup where the frequency of mesh
refinement (and coarsening) varies such that AMR is performed each 32, 16, 8,
and 4 time step iterations, where the time step length is adapted based on a
local truncation error.  For brevity, we refer to these varying frequencies of
mesh refinement as RF32, RF16, RF8, and RF4, respectively.  These refinement
frequencies are used in experiments with standard mesh refinement without
prediction and AMR with prediction. The adaptive levels for mesh refinement
range from 2 to 6, hence capturing a level contrast of 4, where a uniform mesh
of level 2 would contain 192
\editOne{FV cells}{cells}
and each additional level subdivides a parent cell into 4 child cells.

The RFP model parameters for this study are $E=0.5$ and $\alpha=0.1$; the time
interval of the simulation is $t=0,\ldots,4$; and the MUSCL scheme is used.

The AMR indicator LogDR~\eqref{eq:ldr} is a metric based on features of the numerical
solution that quantifies the smoothness of the solution with nondimensional
values.  When the solution exhibits artifacts or oscillations arising from too
coarse meshes, the metric increases to values larger than one.
Therefore, we use this metric to evaluate how well the solution is resolved by
a dynamically adapted mesh, and we compare the differences as the refinement
frequency varies and for AMR without and with prediction.

Initially, we observe how the mean of the AMR indicator  evolves in time. In
Figure~\ref{fig:pred0-vs-pred1-mean} the top graphs correspond to
standard AMR (without prediction), and the bottom graphs show AMR with
prediction.  The colors of each graph indicate different refinement
frequencies.
The top of the figure
shows how more frequent mesh
adaptation is resulting in a reduction of the spatial mean of the metric and
how the metric is approaching unity.  These results stand in contrast to the bottom of
the figure, where the AMR indicators are propagated
ahead of the solution, which results in the metric being significantly lower,
by about 10--40\%
Additionally, the curves corresponding to different refinement frequencies are
more clustered together and below unity, showing that less frequent mesh
adaptation must not result in poorer-resolved solutions as in the case without
AMR prediction.
The trend as refinement frequency increases (RF32-pred \ldots RF4-pred) is
reversed compared with the top curves (RF32 \ldots RF4), because the metric
associated with RF4-pred (pink color) is above the one for RF32-pred (green
color).  The reason  is that the prediction intervals are longer for RF32-pred and
therefore more cells of the mesh are refined along the predicted path of the
solution.

Complementary to the evolution of the mean of the metric,
Figure~\ref{fig:pred0-vs-pred1-mesh} plots the evolution of the mesh size.
This figure shows how many more mesh cells are being generated due to the
prediction of AMR indicators.  The increase in the number of cells is
$<$10\%
$\sim$20\%
RF32-pred compared with RF4-pred also support the above observations about the
mean of the metric (i.e., RF32-pred's metric is below RF4-pred).
While the discussion has been highlighting the extreme cases RF32-pred and
RF4-pred, the intermediate setups RF16-pred and RF8-pred show only moderate
increases in mesh sizes (orange and purple curves in
Figure~\ref{fig:pred0-vs-pred1-mesh}) while at the same time keeping the
metric uniformly bounded below one (orange and purple curves in
Figure~\ref{fig:pred0-vs-pred1-mean}).  This shows that a balance between mesh
size, implying computational cost, and boundedness of the metric, implying a
well-resolved solution, is possible in practice.

\begin{figure}\centering
  \includegraphics[width=0.7\columnwidth]{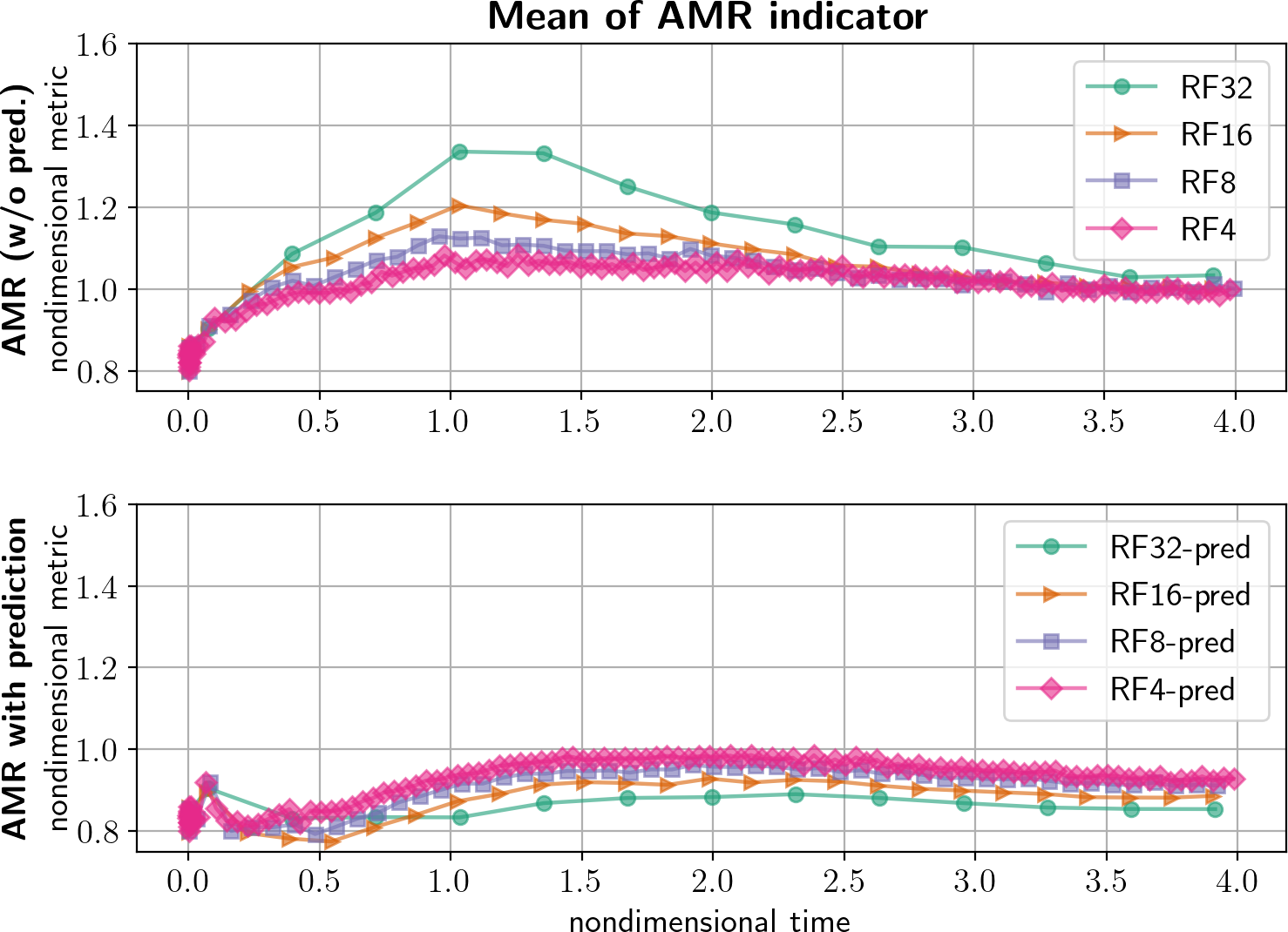}
  \vspace{-0.5\baselineskip}
  \caption{Spatial mean of the AMR indicator LogDR~\eqref{eq:ldr} evolving
    in time: AMR without prediction \emph{(top)} and AMR with prediction
    \emph{(bottom)}.
    Decreasing refinement frequencies (RF32, RF16, RF8, RF4) are lowering the
    LogDR metric, whereas with prediction the metric remains at low levels for
    any refinement frequency.}
  \label{fig:pred0-vs-pred1-mean}
  \vskip 4ex
  \includegraphics[width=0.7\columnwidth]{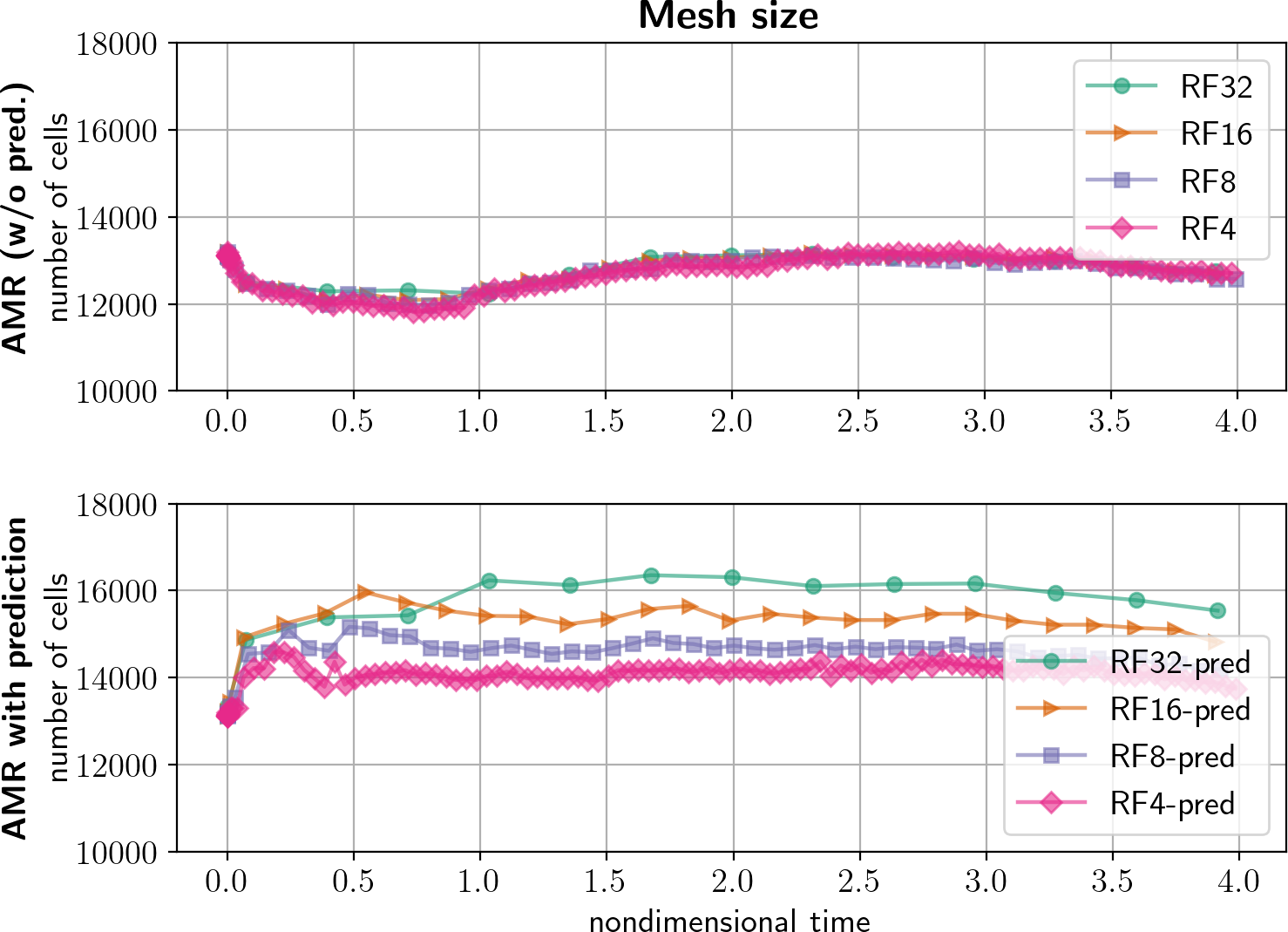}
  \vspace{-0.5\baselineskip}
  \caption{Temporal evolution of the mesh size reported as number of
    \editOne{FV cells}{cells}.
    The \emph{top} and \emph{bottom} graphs and different \emph{colors}
    correspond to the metric shown in Figure~\ref{fig:pred0-vs-pred1-mean}.}
  \label{fig:pred0-vs-pred1-mesh}
\end{figure}

Going beyond observations of the spatial mean of the metric as it evolves in
time, we also give a sense of the distribution of the metric at time instances.
Figure~\ref{fig:pred0-vs-pred1-dist} shows the envelope of one
standard deviation about the mean for each of the experiments discussed
previously.
The bottom of the figure
shows how AMR with
prediction is keeping steady control of the distribution of the AMR indicators,
because the curves are staying predominantly flat.
AMR without prediction, however, shown at the top of the figure,
demonstrates that large values of the
metric are being created for lower frequencies of refinement (see RF32, green
color).  To reduce the spread of values, one has to refine more
frequently (see RF4, pink color), which consequently comes at an increased
computational cost (see Table~\ref{tab:pred0-vs-pred1} discussed below).

The most extreme discrepancy between AMR with and without prediction is
demonstrated in Figure~\ref{fig:pred0-vs-pred1-contrast}.  The graphs in this
figure show envelopes between the spatial mean and maximum values of the metric
as it evolves in time.  That is, these graphs are showing a contrast of the
metric between mean and maximum (note that the spatial minimum of the metric is
zero for all experiments and hence cannot be used to define a contrast).  The
differences between AMR with and without prediction are most pronounced in
these figures, with a factor of $\sim$3 difference between the
top and bottom graphs of Figure~\ref{fig:pred0-vs-pred1-contrast}.
Note that in Figure~\ref{fig:pred0-vs-pred1-contrast} the initial mesh at
$t=0$ is the same for all the cases.  Thus  a few mesh adaptation steps
are needed until a more steady state of the maximum metric is reached, and it takes longer
for RF32-pred, where AMR is performed less frequently.

\begin{figure}\centering
  \includegraphics[width=0.7\columnwidth]{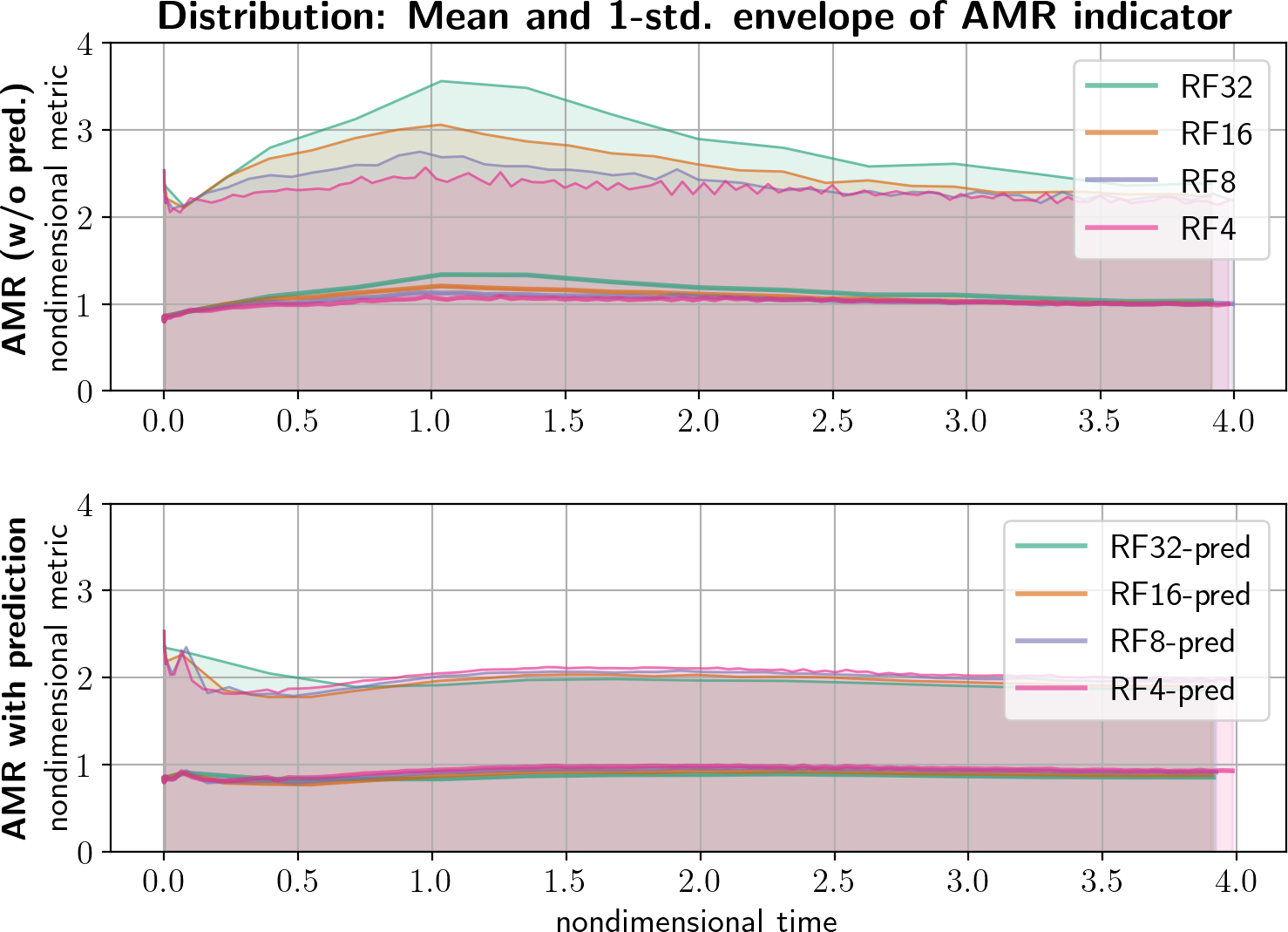}
  \vspace{-0.5\baselineskip}
  \caption{Spatial distribution of AMR indicator around the mean from
    Figure~\ref{fig:pred0-vs-pred1-mean} that is shown as an envelope of one
    standard deviation above and below the mean curve. (Note: The vertical axis
    is clipped at zero because the AMR indicator is nonnegative.)
    AMR without prediction is shown in the \emph{top} and AMR with prediction
    in the \emph{bottom} graph.}
  \label{fig:pred0-vs-pred1-dist}
\end{figure}

\begin{figure}\centering
  \includegraphics[width=0.7\columnwidth]{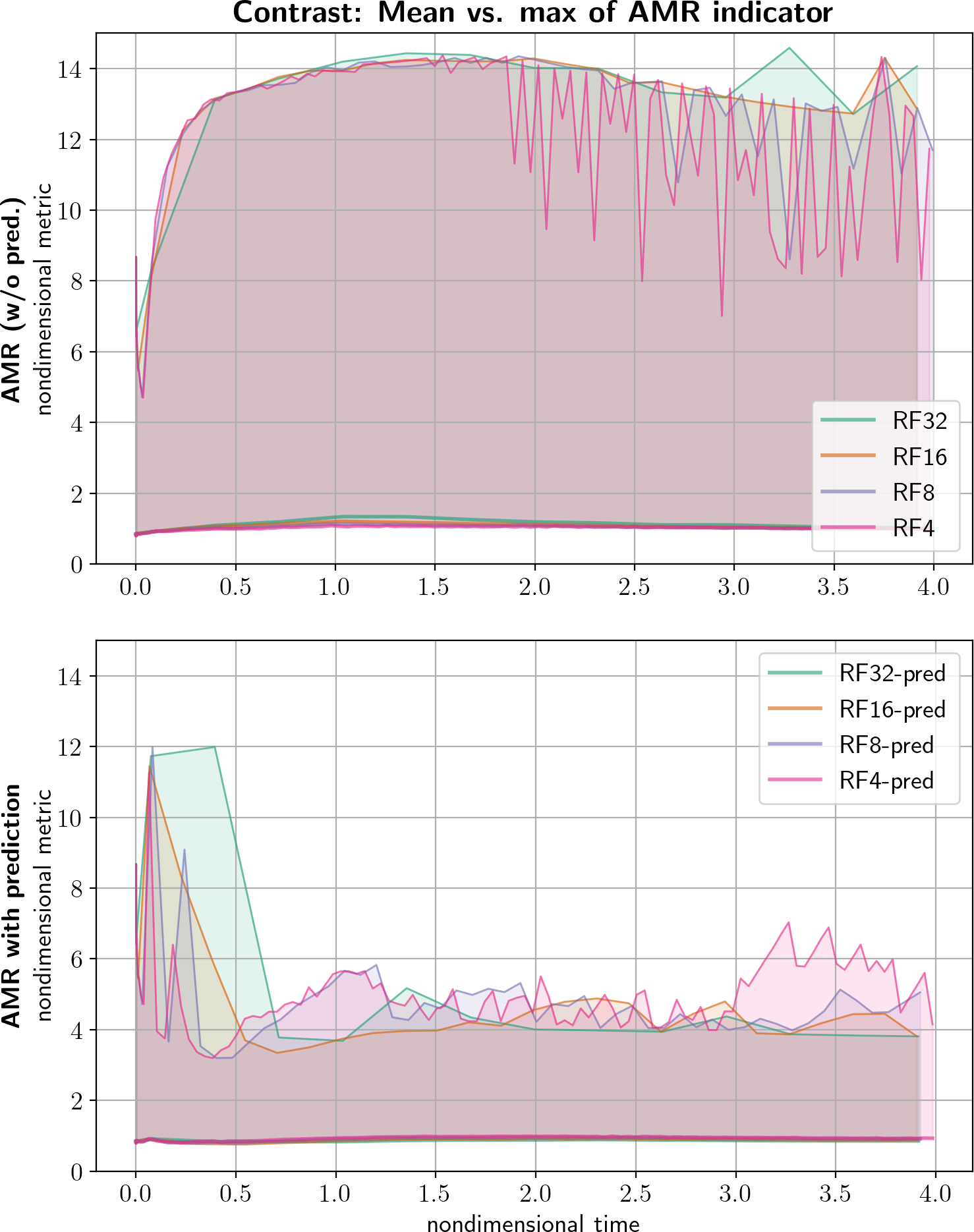}
  \vspace{-0.5\baselineskip}
  \caption{Maximum value of AMR indicator relative to the mean from
    Figure~\ref{fig:pred0-vs-pred1-mean}.
    that is shown as one standard deviation
    above and below the mean curve.
    AMR without prediction is shown in the \emph{top} and AMR with prediction
    in the \emph{bottom} graph.}
  \label{fig:pred0-vs-pred1-contrast}
\end{figure}

We summarize the preceding observations in
Table~\ref{tab:pred0-vs-pred1}.  This table lists summary statistics of the AMR
indicator by taking the time average of the spatial mean, denoted by
$\langle\text{mean}\rangle_t$, and the time averages of the standard deviation
and the maximum, denoted by $\langle\text{std}\rangle_t$ and $\langle\text{max}\rangle_t$,
respectively.  Further, the table lists a time-averaged mesh size and the
maximum mesh size over all time steps.
Complementing the earlier figures, the table presents data
about the computational costs in terms of
the time steps used until final time (these values are closely clustered), the
total number of Newton iterations across all time steps (these increase with
frequency of AMR), and the total number of GMRES iterations used (these follow
a  trend similar to the Newton iterations).
The last column of the table gives the run time of the
simulation overall, which was obtained on one node with 56 CPU cores of the
Frontera supercomputer (details in Table~\ref{tab:frontera}).
The run times show, as expected, that higher frequencies of mesh refinement
result in larger run times.
Additionally the run times for AMR with prediction are slightly larger than
for AMR without prediction, while AMR frequency is the same.  However,
we have demonstrated above that the accuracy of the solution is significantly
improved by AMR prediction.  The table shows that the run times are, for
instance, $\sim$15\%
RF8-pred).  This is a modest increase in computational cost but a dramatic
increase in accuracy as measured by the AMR indicator (shown in the second
column).

\begin{table}\centering
  \caption{Comparison of AMR without prediction \emph{(top four rows)} and AMR
    with prediction \emph{(bottom four rows)}.
    The table summarizes
    Figures~\ref{fig:pred0-vs-pred1-mean}--\ref{fig:pred0-vs-pred1-contrast} by
    reporting temporal averages of the AMR indicator and mesh sizes, where
    $\langle\cdot\rangle_t$ denotes a temporal average for the time interval
    $0.5<t\le4$.
    The computational cost is given in terms of iterations of Newton's method,
    the iterations of GMRES for the linearized systems of Newton, right-hand
    side (RHS) evaluations, and run time.}
  \label{tab:pred0-vs-pred1}
  \centering
  \scriptsize
  \setlength{\tabcolsep}{0.3em}  
\begin{tabular}[t]{lccccccc}
  \toprule
      \thead{Refinement}
    & \thead{AMR indicator}
    & \thead{Number of cells}
    & \thead{Time}
    & \thead{Newton}
    & \thead{GMRES}
    & \thead{RHS}
    & \thead{Run time}
  \vspace{-2ex}\\
      \thead{frequency}
    & $\langle\text{mean}\rangle_t ; \langle\text{std}\rangle_t ; \langle\text{max}\rangle_t$
    & average~;~max
    & \thead{steps}
    & \thead{iterations}
    & \thead{iterations}
    & \thead{eval's}
    & [seconds]
  \\
  \midrule
  RF32      & 1.16~;~1.70~;~13.88 & 12,884~;~13,596 & 456 & 1,559 & 4,896 & 59,853 & 21.02 \\
  RF16      & 1.09~;~1.49~;~13.68 & 12,836~;~13,596 & 459 & 1,711 & 5,449 & 68,163 & 23.58 \\
  RF8       & 1.05~;~1.35~;~13.32 & 12,760~;~13,596 & 458 & 1,871 & 6,006 & 75,902 & 26.42 \\
  RF4       & 1.03~;~1.28~;~12.48 & 12,740~;~13,596 & 454 & 1,934 & 6,262 & 79,222 & 28.12 \\
  \midrule
  RF32-pred & 0.86~;~1.06~;~ 4.07 & 15,400~;~16,356 & 456 & 1,586 & 4,938 & 62,916 & 26.41 \\
  RF16-pred & 0.89~;~1.07~;~ 4.15 & 15,028~;~15,960 & 457 & 1,630 & 5,158 & 66,400 & 26.24 \\
  RF8-pred  & 0.92~;~1.08~;~ 4.61 & 14,444~;~15,168 & 454 & 1,694 & 5,480 & 69,627 & 32.84 \\
  RF4-pred  & 0.95~;~1.09~;~ 4.99 & 14,004~;~14,592 & 454 & 2,034 & 6,661 & 85,296 & 32.84 \\
  \bottomrule
\end{tabular}





\end{table}

We further want to illustrate the  quantitative observations with
qualitative figures of the numerical solution.
The effects of the different AMR settings can be observed qualitatively by
visualizing the solution and the associated mesh. This is done in Figure~\ref{fig:pred0-vs-pred1-t1.99},
where the plots visualize the solution at time $t=1.99$.
Looking at the edges of the solution's distribution, we can clearly see
artifacts for nonpredictive AMR in the left column of the figure, when the
refinement lags behind the solution.  Frequency RF32 (top left) shows the most
severe artifacts.
These artifacts can be ameliorated by faster refinement frequencies, RF32,
RF16, RF8, and RF4 (top left to bottom left, respectively).
On the other hand, in the right column of Figure~\ref{fig:pred0-vs-pred1-t1.99}
when AMR prediction is active, the solution's features are resolved as well as or better
than was the case for RF4 (bottom left) before.  This result holds for all
plots: RF32-pred, RF16-pred, RF8-pred, and RF4-pred (top right to bottom right,
respectively).
The additional mesh cells that AMR prediction generates are most pronounced for
RF32-pred (top right in Figure~\ref{fig:pred0-vs-pred1-t1.99}), where the
prediction window is the longest.  This illustrates the additional
computational cost that was observed before (e.g., in
Table~\ref{tab:pred0-vs-pred1}).  Reducing the prediction window to RF16-pred
or RF8-pred results in a better balance between extra generated cells and
resolving the numerical solution.

\begin{figure}\centering
  \includegraphics[width=0.98\columnwidth]{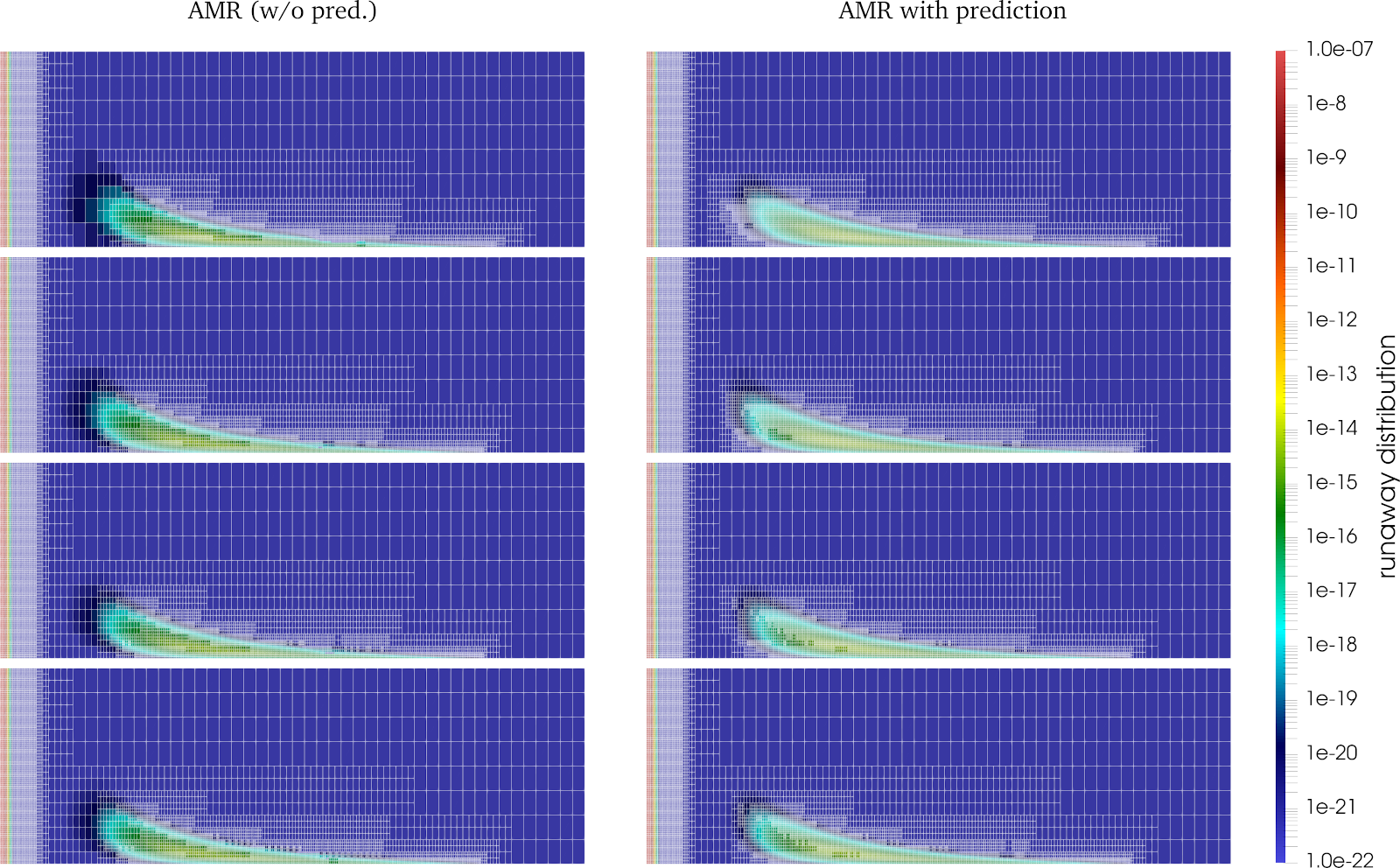}
  \caption{Visualization of the refinement levels of the dynamically adapted
    mesh \emph{(white lines)} at $t=1.99$, without prediction \emph{(left
    column)} vs.\ AMR with prediction \emph{(right column)}, while showing the
    numerical solution in \emph{colors}.  The four \emph{rows} of plots
    correspond, from top to bottom, to RF32, RF16, RF8, and RF4.}
  \label{fig:pred0-vs-pred1-t1.99}
\end{figure}

\subsection{Algorithmic robustness under different damping coefficients and electric fields}
\label{sec:robustness-results}

The first practical study focuses on the impact of two important coefficients, the damping coefficient $\alpha$, and the electric field $E$.
In this study, the Fokker--Planck collision is turned on as usual while the knock-on source is turned off.
The runs here all use the same initial condition, which is a Maxwellian with a small perturbation in the tail region.
We vary two coefficients in a range close to practice.
The AMR algorithm in this example uses a base mesh of $48\times 8$ for a domain of $[0.3, 60]\times[-1, 1]$ and a total of 7 levels of refinement.

{
\newcommand{\trimfig}[2]{\trimFig{#1}{#2}{.0}{.0}{0}{0}}
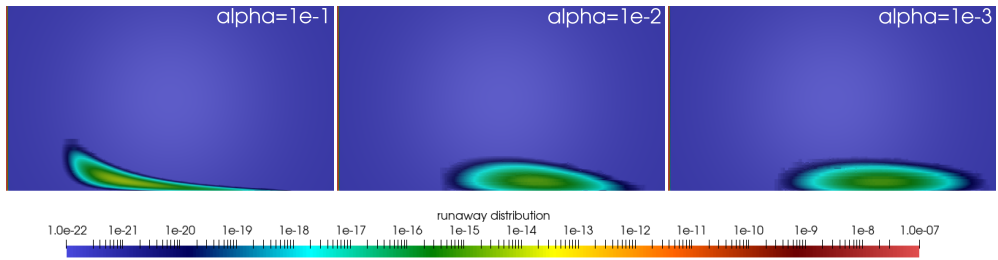
\begin{figure}[htb]
\begin{center}
\begin{tikzpicture}[scale=1]
\useasboundingbox (0,-.5) rectangle (12,2.5);
\draw(-.5,-.1) node[anchor=south west,xshift=-4pt,yshift=+0pt] {\trimfig{figs/compare/alpha_0p1}{4.35cm}};
\draw(3.9,-.1) node[anchor=south west,xshift=-4pt,yshift=+0pt] {\trimfig{figs/compare/alpha_0p01}{4.35cm}};
\draw(8.3,-.1) node[anchor=south west,xshift=-4pt,yshift=+0pt] {\trimfig{figs/compare/alpha_0p001}{4.35cm}};
\draw(0,-1.2) node[anchor=south west,xshift=-4pt,yshift=+0pt] {\trimfig{figs/compare/range_log}{12cm}};
\end{tikzpicture}
\end{center}
  \caption{Comparison of RFP solutions under different damping coefficients $\alpha$. The field is fixed as $E=0.5$, and the final time is $T=1$.
  The knock-on source is turned off. Note that the solutions are presented in the log scale and spread over more than 15 orders of magnitude.
  }
  \label{fig:compare_alpha}
\end{figure}
}

Distribution functions at $T=1$ with different damping coefficients are presented in Figure~\ref{fig:compare_alpha}.
One can see that a larger damping term leads to the initial high-energy perturbation moving close to the low-energy region (low $p$).
Meanwhile, Maxwellian bulks at the low-energy region are  similar under different $\alpha$, as expected.
Note that solutions in Figure~\ref{fig:compare_alpha} are presented in  log scale, which shows the distribution function spreads more than 15 orders of magnitude.

{
\newcommand{\trimfig}[2]{\trimFig{#1}{#2}{.0}{.0}{0}{0}}
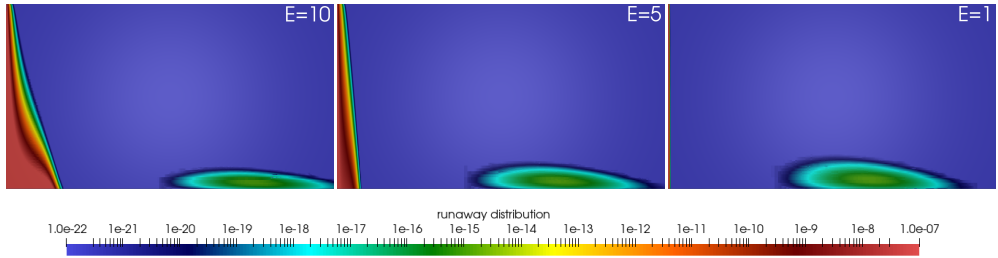
\begin{figure}[htb]
\begin{center}
\begin{tikzpicture}[scale=1]
\useasboundingbox (0,-.5) rectangle (12,2.5);
\draw(-.5,-.1) node[anchor=south west,xshift=-4pt,yshift=+0pt] {\trimfig{figs/compare/E_10}{4.35cm}};
\draw(3.9,-.1) node[anchor=south west,xshift=-4pt,yshift=+0pt] {\trimfig{figs/compare/E_5}{4.35cm}};
\draw(8.3,-.1) node[anchor=south west,xshift=-4pt,yshift=+0pt] {\trimfig{figs/compare/E_1}{4.35cm}};
\draw(0,-1.2) node[anchor=south west,xshift=-4pt,yshift=+0pt] {\trimfig{figs/compare/range_log}{12cm}};
\end{tikzpicture}
\end{center}
  \caption{Comparison of RFP solutions under different electric fields $E$. The damping coefficient is fixed as $\alpha=0.01$, and the final time is $T=1$.
  The knock-on source is turned off. Note that the solutions are presented in  log scale and spread over more than 15 orders of magnitude.
  }
  \label{fig:compare_e}
\end{figure}
}

Distribution functions at $T=1$ with different electric fields are presented in Figure~\ref{fig:compare_e}. We find that a larger $E$ field will push the entire distribution to the high-energy region, as expected.
The expansion of the Maxwellian is found to be faster in a larger $E$ case. In practice, however, the actual distribution will be a result of the interplay of many factors such as the damping force, the electric field, and the Fokker--Planck collision.
Therefore, the distribution will not  move all the way to the high $p$ region but instead form an interesting fat tail in a moderate high $p$ region. This effect will be studied more carefully later.
In addition, we  find that AMR can capture both the Maxwellian expansion and the movement of the tail very well (not presented here).

The second focus of this example is to study the performance of the numerical algorithms under different coefficients.
Such a study for our solver is not easy because the time steps and  details of adaptive meshes change dynamically throughout the simulations.
(Here we use ESDIRK and estimate the time step  using local truncation error estimators.) In addition, when the coefficients change, the numerical difficulty of the model also changes.
To have a fair comparison, we run all the simulations with the same final time $T=1$ and average all the important algorithm quantities over the number of time steps.

\begin{table}[htb]
\caption{Comparison of RFP solver performance under different damping coefficients $\alpha$. The field is fixed as $E=2$, and the final time is $T=1$.
  \label{table:compare_alpha}}
\centering
\vskip 1ex
\footnotesize
\begin{tabular}{ccccc}
\toprule
  \thead{$\alpha$} & \thead{Averaged $\Delta t$} & \thead{Averaged DOFs} &  \thead{RHS eval./solve} & \thead{GMRES it./solve} \\
\midrule
  \num{1}{-1} & 0.00351 & 132358 & 42.85 & 4.49 \\
  \num{5}{-2} & 0.00420 & 120592 & 43.02 & 4.52 \\
  \num{2}{-2} & 0.00441 & 114166 & 43.17 & 4.67 \\
  \num{1}{-2} & 0.00457 & 111694 & 43.22 & 4.66 \\
  \num{5}{-3} & 0.00463 & 110459 & 43.02 & 4.71 \\
\bottomrule
\end{tabular}
\end{table}

We start with testing the damping coefficients. Recall that the practical range of $\alpha$ is 0.001 to 0.3. We test several $\alpha$ and present the corresponding results in Table~\ref{table:compare_alpha}.
The electric field is fixed as $E=2$, which is above the so-called avalanche threshold (see Figure 7 in~\cite{mcdevitt2018relation}).
Here we use hypre's algebraic multigrid solver (BoomerAMG) as the preconditioner for the linearized system.
We further choose hypre's parallel ILU solver as the multigrid smoother (Euclid) and use default values  in the rest hypre options.
Table~\ref{table:compare_alpha} shows that the algorithm performs  well for different $\alpha$.
We also note that as $\alpha$ increases, the averaged time step becomes smaller, and the averaged AMR degrees of freedom become larger,
which all indicate that the problem becomes harder for large $\alpha$.
Nevertheless, the  solver still performs well. %
We also present the averaged RHS evaluation per solve. The majority of those evaluations come from evaluating the Jacobian through finite difference coloring.

\begin{table}[htb]
\caption{Comparison of RFP solver performance under different electric fields $E$. The damping coefficient is fixed as $\alpha=0.1$, and the final time is $T=1$. \label{table:compare_e}}
\centering
\vskip 1ex
\footnotesize
\begin{tabular}{ccccc}
\toprule
  \thead{$E$} & \thead{Averaged $\Delta t$} & \thead{Averaged DOFs} &   \thead{RHS eval./solve} & \thead{GMRES it./solve} \\
\midrule
 2  & 0.00351 & 132358 & 42.85 & 4.49 \\
 5  & 0.00244 & 130881 & 43.02 & 4.50 \\
 10 & 0.00192 & 131288 & 43.12 & 3.96 \\
 15 & 0.00183 & 132139 & 41.82 & 3.70 \\
 20 & 0.00179 & 132657 & 41.77 & 3.51 \\
\bottomrule
\end{tabular}
\end{table}

We then consider the impact of the electric field. A reasonable region for the normalized electric field from practice would be the interval $[0, 20]$. We test several $E$ and present the corresponding results  in Table~\ref{table:compare_e}.
We find that the linear and nonlinear solvers all perform well. We also note that the averaged time steps become smaller for larger $E$, which is expected as the problem becomes  stiffer.

\subsection{Algorithmic and parallel scalability}
\label{sec:scalability-results}

This section presents the \editOne{}{algorithmic as well as} parallel
scalability of the overall numerical simulations.
\editOne{Included are}
{One aspect of scalability is algorithmic scalability, which is the dependence
of Newton and/or Krylov iterations on the spatial and temporal resolutions.  The
second aspect is parallel scalability of the implementation, which is the
runtime measured on increasing numbers of compute cores.  Studying both aspects
is required to fully assess the performance of a solver at scale.
The runtimes measuring parallel scalability include}
the AMR algorithms, the numerical scheme, the linear solver, and the setup of
the AMG-based preconditioner.  The computations are carried out on the Frontera
system,  a CPU-based platform detailed in Section~\ref{sec:hw-sw}.

\editBOne{The setup of the scalability runs is challenging because during a
simulation the mesh is adaptively refined in a dynamic fashion, where the
refinement and coarsening depend on properties of the solution (see
Section~\ref{sec:amr}).  Therefore, it is difficult to increase the problem size
proportionally with the number of compute cores.  Because of these difficulties
we choose to present a sequence of strong scalability results only, meaning we
omit the demonstration of weak scalability.}{}
In order to obtain strong scalability results, the parameters of the problem,
and the problem size specifically, remain the same for each run while the number
of compute cores increases by a factor of 2.
\editBOne{}{The setup of weak scalability runs entails the challenge to control
the increase of the problem size proportional to the number of compute cores,
because the mesh is adaptively refined in a dynamic fashion depending on
properties of the solution.}

We generate different experiment configurations that exhibit increasing problem
sizes.  Each of these configurations provides a new baseline for showing strong
scalability, where the baseline core counts are increasing with larger problem
sizes:
(i) problem size with 1.4 million (time-averaged) \editOne{FV cells}{cells} is scaled from 112 to 7,168 cores;
(ii) problem size with 2.9 million (time-averaged) \editOne{FV cells}{cells} is scaled from 224 to 14,336 cores; and
(iii) problem size with 4.4 million (time-averaged) \editOne{FV cells}{cells} is scaled from 448 to 28,672 cores.
These three different mesh sizes feature increasingly aggressive mesh adaptivity,
which is measured by the difference between finest mesh level and coarsest mesh
level of refinement;  we call this the \emph{mesh level contrast}.
The mesh level contrast for the above described problem sizes is
(i) 8 levels contrast,
(ii) 9 levels contrast, and
(iii) 10 levels contrast.
In addition to the different mesh configurations, we alter the frequency
of mesh adaptation analogous to Section~\ref{sec:pred0-vs-pred1}.  As before,
the label RF32 stands for performing mesh adaptivity after every 32 time steps,
and additionally reported refinement frequencies are RF64, RF128, and RF256.
The AMR indicators are predicted, as proposed in Section~\ref{sec:amr-pred},
for all of these frequencies.

The RFP model parameters for this study are $E=5$ and $\alpha=0.1$, which
correspond to a realistic parameterization (see
Section~\ref{sec:robustness-results}). The Fokker--Planck collision is
turned on, while the knock-on source is turned off.
To demonstrate scalability, we prescribe 512 time
steps of the implicit second-order ESDIRK
time integrator of PETSc with a constant time step length that decreases along
with finer mesh resolutions. We use the QUICK scheme.

\editOne{}{
Table~\ref{tab:alg-scalability} demonstrates the algorithmic scalability of the
overall solver.  It shows along the columns the three increasing problem sizes
from 1.4 million to 4.4 million cells; and it also lists the varying refinement
frequencies, RF512 to RF32, in each row.  The number of GMRES iterations per
time step is given either as a range, to present its variation with increasing
core counts, or as a single number if there are no variations.  Recall that the
number of time steps is prescribed and constant. Moreover, the stopping
criterion for GMRES is the tolerance $10^{-6}$ for relative residual reduction.
We can see from the table that the number of GMRES iterations per time step is
around 14 for the smallest problem size (1.4~M cells), and it is six for the
larger problem sizes (2.9 and 4.4~M cells).  This demonstrates optimal
algorithmic scalability, because the number of iterations is not increasing as
the mesh is refined.  The larger GMRES iteration count of $\sim$14 for 1.4~M
cells can be explained by slightly worse conditioning of the disretized PDE
operator, which can be caused by large variations in the coefficients relative
to the resolution of the mesh.
}

\begin{table}\centering
  \caption{\editOne{}{Algorithmic scalability results show the number of GMRES
  iterations per time step either as a range (\emph{min \ldots\ max} value) over
  runs with increasing core counts or as a single number (when min is equal to
  max).  Columns show increasing problem sizes as number of cells, and rows show
  decreasing refinement frequencies.
  The number of GMRES iterations needed per solve is not increasing as the mesh
  becomes finer, which means the solver achieves optimal algorithmic scalability.
  }}
  \label{tab:alg-scalability}
  \centering
  \footnotesize
\begin{tabular}[t]{lccc}
  \toprule
      \thead{Refinement}
    & \thead{1.4 M cells}
    & \thead{2.9 M cells}
    & \thead{4.4 M cells}
  \vspace{-2ex}\\
      \thead{frequency}
    & (112 to  7,168 cores)
    & (224 to 14,336 cores)
    & (448 to 28,672 cores)
  \\
  \midrule
  RF256 & 12.7 \ldots\ 17.1 & 6.0 & 6.0 \\
  RF128 & 11.2 \ldots\ 14.5 & 6.0 & 6.0 \\
  RF64  & 13.6 \ldots\ 14.1 & 6.0 & 6.0 \\
  RF32  & 13.6 \ldots\ 14.6 & 6.0 & 6.0 \\
  \bottomrule
\end{tabular}

\end{table}

Implicit solvers with variable coefficients and discretized on adaptive meshes
are well known to be challenging to scale strongly in parallel requiring
significant efforts (e.g., see \cite{rudi2015extreme} for variable coefficient
Poisson and Stokes solvers).
\editBOne{Each of the three figures shows strong scalability for one of the
defined problem configurations: Figures~\ref{fig:strong-scalability-1}
and~\ref{fig:strong-scalability-3}
correspond to the strong scalability results for the problem sizes
(i) 1.4 million,
(ii) 2.9 million, and
(iii) 4.4 million \editOne{FV cells}{cells}, respectively.}
{The following two figures show strong scalability for the previously defined
problem configurations:
Figures~\ref{fig:strong-scalability-1} corresponds to problem size (i) 1.4 million and
Figures~\ref{fig:strong-scalability-3} corresponds to problem size (iii) 4.4 million
(we omit the presentation for problem size (ii) 2.9 million for brevity).}
Each figure also shows four differently colored curves, which we use to
distinguish between refinement frequencies.
Every figure shows the number of cores on the horizontal axis.  Below the core
count, the average number \editOne{FV cells}{cells} per core is reported in
parentheses, which tells how large the portion of the distributed problem is for
each core.
Mainly because of the reduction in problem size per core, the
communication starts to dominate the simulation's run time, because the fewer
degrees of freedom that reside at a compute unit's local memory, the more
communication needs to be performed via MPI.
The vertical axis shows the speedup in run time, which is calculated as the
quotient of the baseline run time over the run time with associated core count.
We report normalized speedup, where the run time is normalized with respect to
the number of GMRES iterations per time step, because we document parallel
scalability separately from algorithmic scalability.
The speedup depicted by the gray dashed line is the idealized speedup,
which, for for implicit solvers, is well known to be out of reach.

\begin{figure}\centering
  \includegraphics[width=0.9\columnwidth]{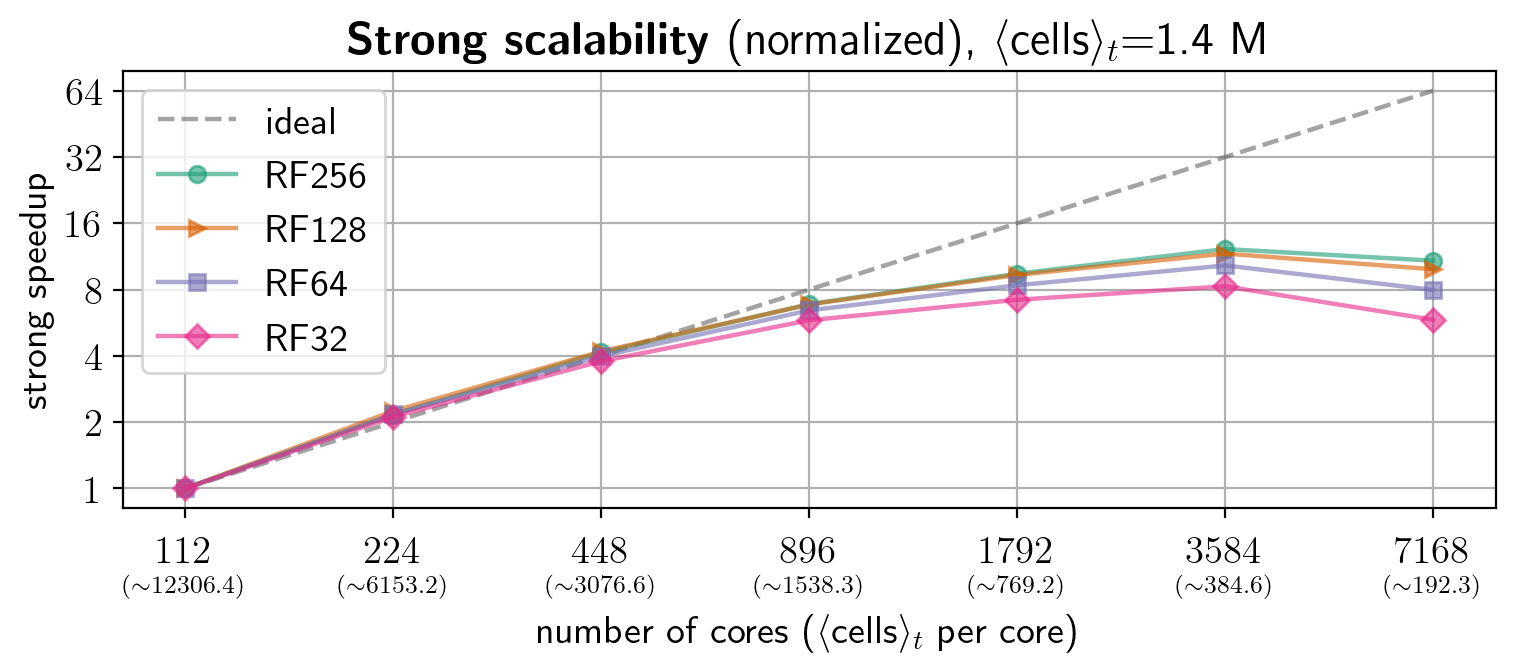}
  \vspace{-0.5\baselineskip}
  \caption{Strong scalability results on Frontera from 2 to 128 nodes, where
    the baseline problem has 1.4 million
    \editOne{FV cells}{cells}
    and a range of 8 levels of refinement; $\Delta t=10^{-5}$.}
  \label{fig:strong-scalability-1}
  \vskip 4ex
  \vskip 4ex
  \includegraphics[width=0.9\columnwidth]{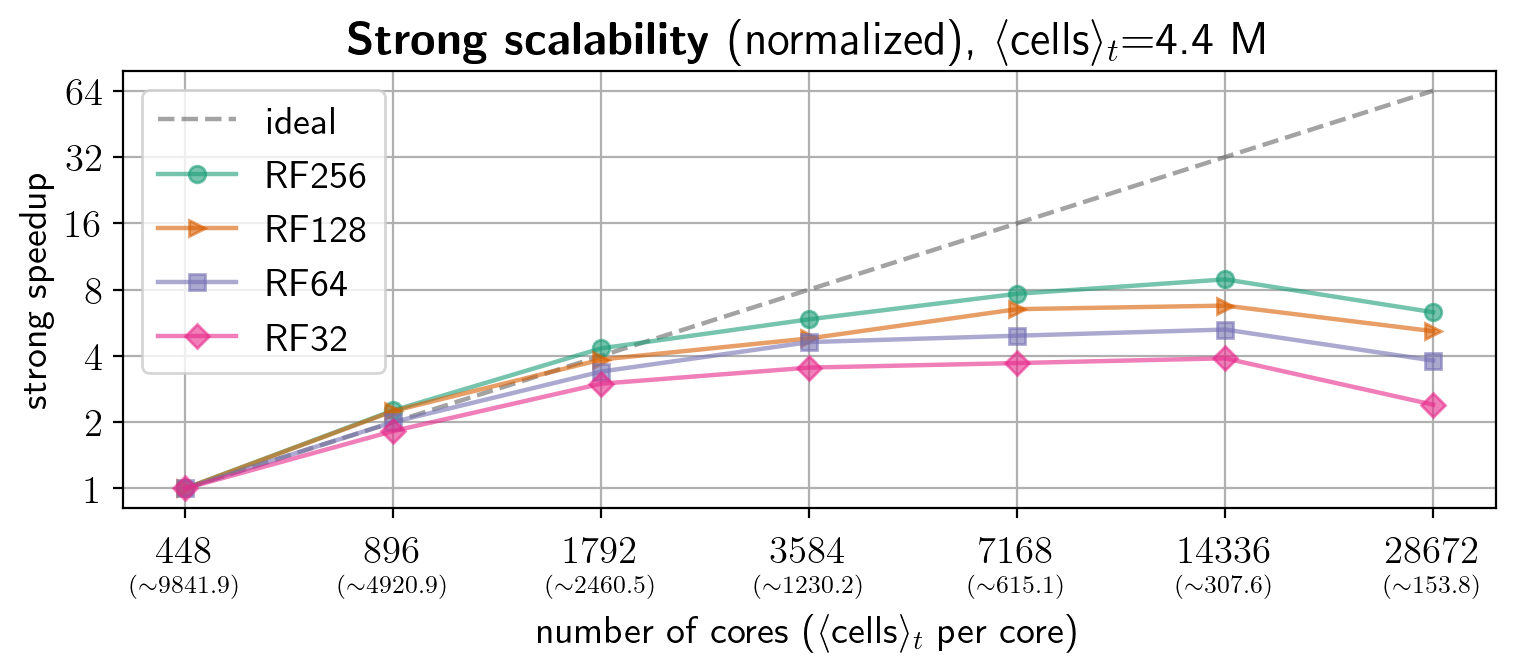}
  \vspace{-0.5\baselineskip}
  \caption{Strong scalability results on Frontera from 8 to 512 nodes, where
    the baseline problem has 4.4 million
    \editOne{FV cells}{cells}
    and a range of 10 levels of refinement; $\Delta t=10^{-6}$.}
  \label{fig:strong-scalability-3}
  \vskip 4ex
  \includegraphics[width=0.9\columnwidth]{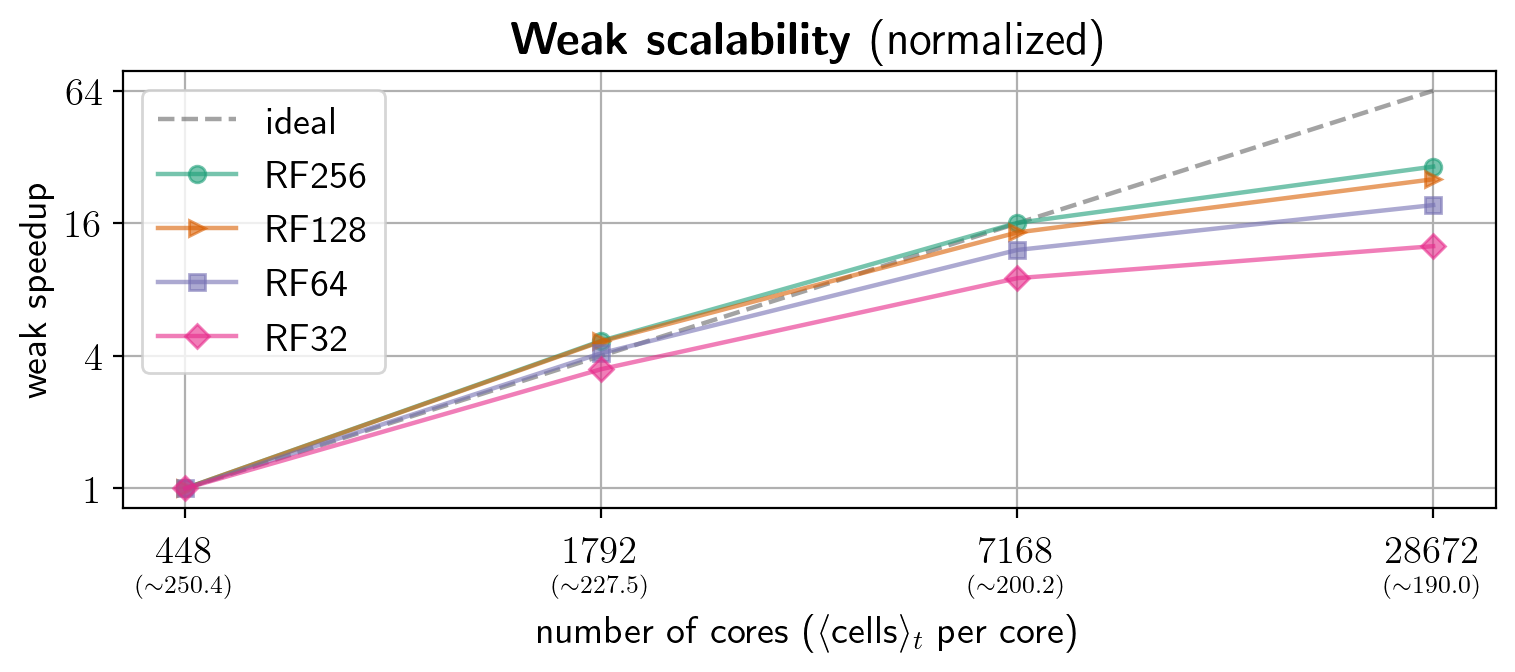}
  \vspace{-0.5\baselineskip}
  \caption{Weak scalability results on Frontera from 8 to 512 nodes, where
    the problem sizes range from 112 thousand (at 448 cores) to 5.4 million
    cells (at 28,672 cores).}
  \label{fig:weak-scalability}
\end{figure}

Each of the Figures~\ref{fig:strong-scalability-1} and~\ref{fig:strong-scalability-3}
shows a general trend of speedup that is unavoidable when strongly scaling an
implicit method: At first, the speedup stays close to the ideal speedup with
initial increases in cores; this is where the run time is dominated by
computations and/or communication is overlapped with computations.  Speedup
then gradually deviates from ideal and starts to flatten up to a certain
increase in cores.  After the flattening, the speedup decreases because the
time used for communication dominates compared with the time used for
computations.
Our strong scalability results follow this general trend. Specifically,
they demonstrate that the highest achievable speedup is 12.2 (green curve at
3,584 cores in Figure~\ref{fig:strong-scalability-1})
The highest speedup is slightly lower in
Figure~\ref{fig:strong-scalability-3} (green curve reaches speedup of 8).
We show the scalability of different refinement frequencies because this work
focuses on the AMR algorithms as well as AMR-induced overheads.
The more often the mesh is refined dynamically (i.e., the lower the refinement
frequency), the wider the gap is between the case with only two refinements during
the whole simulation (green curve for RF256) and the frequencies RF128
(orange), RF64 (purple), and RF32 (pink).
This implies that to achieve better scalability in our solver, mesh adaptation would
need to happen less often.
Note that as discussed in Section~\ref{sec:dmbf}, the AMR algorithm based on p4est
is highly scalable. The inefficiency comes from the overhead in rebuilding various
operators when the mesh is updated.
 However, less frequent adaptation of the mesh is
detrimental for the accuracy of the solution, as demonstrated in
Section~\ref{sec:pred0-vs-pred1}.
These conflicting objectives need to be balanced, which was the main motivation
in proposing indicator prediction for AMR, which achieves control over higher
accuracies while keeping the impact on computational overheads, and hence
on scalability, at a minimum.
With the demonstrated improvements in accuracy from
Section~\ref{sec:pred0-vs-pred1} due to AMR prediction, we can expect that each
of the refinement frequencies in Figures~\ref{fig:strong-scalability-1}
and~\ref{fig:strong-scalability-3}
resolves the solution similarly well.  Therefore, one can choose the
frequency that is most advantageous for scalability without sacrificing
accuracy.

\editBOne{}{The strong scalability results of
Figure~\ref{fig:strong-scalability-3} are complemented by weak scalability
results in Figure~\ref{fig:weak-scalability}.
We observe a nearly ideal speedup up to 7,168 cores and see a slight deviation
from the ideal for the maximum number of cores. Because of the dynamic
adaptivity, the problem sizes are not perfectly proportionally increasing with
core counts; see the fewer cells per core shown below the numbers of cores in
brackets.  This is a contributing reason that the weak scalability at 28
thousand cores performs slightly worse.
The trend of fewer refinements with RF256 (green curve) performing better than
the other refinement frequencies is similarly observed for weak scalability as
it was for strong scalability.
Note that we also observe better than ideal speedups for 1,792 cores, which is
a known phenomena for time dependent solvers.
}

\subsection{Benchmark with nonlinear PDE}
\label{sec:nlpde}

\editBOne{}{%
One issue of interest is how our scalable implicit solver and
dynamic AMR methods would perform for nonlinear PDEs as opposed to
the linear relativistic kinetic equation shown in this paper.  In the
context of runaway electron dynamics in a magnetized plasma, the
nonlinear coupling is through the inductive parallel electric field,
which is described by a modified Ohm's law in which the plasma current
density is replaced by the difference between total current density
and the runaway current density.  The leading order physics has a
dynamically changing parallel inductive electric field that is
independent of spatial position, so the nonlinear coupling via the
inductive electric field is numerically straightforward, and the
implementation of which is trivially simple compared with the AMR
kinetic solver in momentum space. To give a
  more reasonable test of the AMR machinery for nonlinear problems, we
  design new numerical experiments and
  implement them within our solver codes.
To this end we take
\editCOne{the two-dimensional viscous Burgers' equation}
{a two-dimensional nonlinear convection--diffusion equation}
as a test problem because of its common use for numerical benchmarks.

The domain supporting the PDE is a channel of extensions $x \in [0,6]$
horizontally and $y \in [-1,1]$ vertically.  We seek the solution $f(t,x,y)$ of
\editCOne{Burgers' equation}{the following PDE}
written in conservative form
\begin{equation}
  \label{eq:burgers}
  \frac{\partial f}{\partial t} +
  n_x \frac{\partial}{\partial x} \left( \frac12 f^2 \right) +
  n_y \frac{\partial}{\partial y} \left( \frac12 f^2 \right)
  =
  \nu \frac{\partial^2 f}{\partial x^2} +
  \nu \frac{\partial^2 f}{\partial y^2},
\end{equation}
for $t>0, x \in (0,6), y \in (-1,1)$, where $\mathbf{n}=(n_x,n_y)$ is a
given unit vector, and $\nu>0$ is a given diffusion coefficient.
Equation~\ref{eq:burgers} is complemented with Neumann boundary conditions and we
propagate an initial Gaussian distribution
$f(0,x,y) = \exp\left( -(x - 5)^2/0.16 \right) + \exp\left( -y^2/0.16 \right)$
with the direction of advection $\mathbf{n} = (-1,0)$ and viscosity $\nu =
10^{-4}$ until final time $T_\mathrm{final}=20$.
\editCOne{}{A one-dimensional version of \eqref{eq:burgers} was discussed in
\cite{Burgers1948}, and in the case $\nu=0$, the PDE is commonly referred to as
Burgers' equation.}
Due to the nonlinear nature of the PDE \eqref{eq:burgers}, the changing
advective speed results in the formation of discontinuous solutions.  This
behavior is similar to shock waves, and the shock dissipates as the solution
travels because of the viscosity.

Dynamic AMR is performed every 100th time step with prediction of adaptive
refinement using a linear approximation to \eqref{eq:burgers} and $\nu=0$, such
that a computationally cheap explicit scheme can be employed for propagation of
AMR indicators.  The overall levels of mesh refinement ranges from two to
eight, resulting in a mesh level contrast of six.
Figure~\ref{fig:burgers} presents the snapshots of the numerical solution at
times $t=0,5,10,20$.  We observe the formation of a shock discontinuity at the
left tip of the solution.  AMR is facilitating a higher resolution of the
solution's sharp features as is depicted with a mesh wireframe overlaying the
solution function at final time $t=20$.
}

\begin{figure}\centering
  \includegraphics[width=0.98\columnwidth]{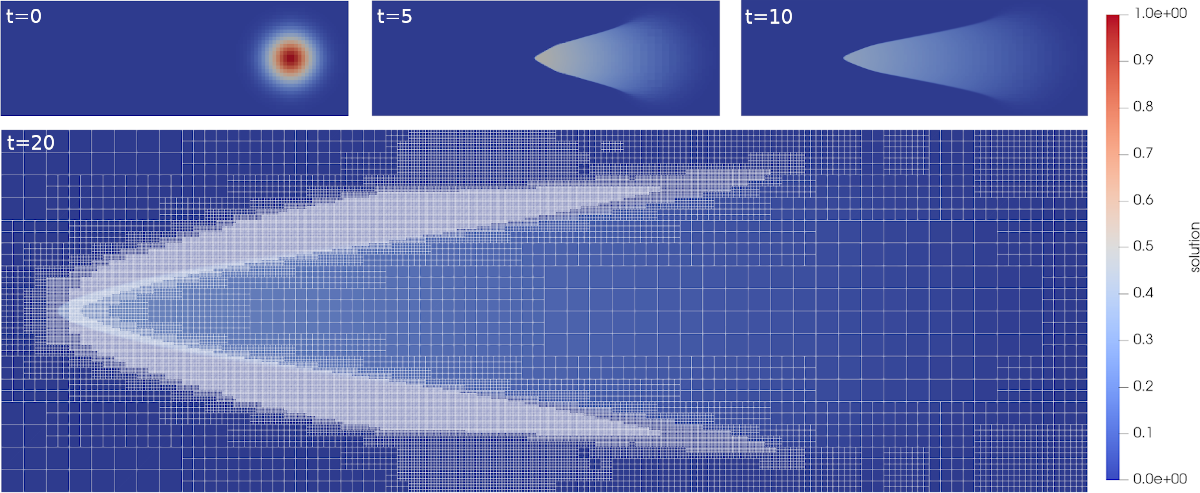}
  \caption{\editBOne{}{%
    The numerical solution of the \editCOne{viscous Burgers' equation}{nonlinear
    PDE \eqref{eq:burgers}} shown as
    snapshots over time. At final time $t=20$, the solution is overlaid with the
    mesh wireframe to show the adaptive refinement resolving sharp features of
    the solution.}}
  \label{fig:burgers}
\end{figure}

\subsection{Compatible boundary condition at $p=0$ with AMR}
\label{sec:p-eq-zero}

{
\newcommand{\figWidth}{2.4cm}
\newcommand{\trimfig}[2]{\trimFig{#1}{#2}{.0}{1}{0}{0}}
\newcommand{\trimfigb}[2]{\trimFig{#1}{#2}{.0}{0}{0}{0}}
\begin{figure}[tb]
\begin{center}
\begin{tikzpicture}[scale=1]
\useasboundingbox (0.0,0) rectangle (12,5);  %
\draw(0,   -.5) node[anchor=south west,xshift=-4pt,yshift=+0pt] {\trimfig{figs/primary/t0}{\figWidth}};
\draw(2.5,-.5) node[anchor=south west,xshift=-4pt,yshift=+0pt] {\trimfig{figs/primary/t2}{\figWidth}};
\draw(5,   -.5) node[anchor=south west,xshift=-4pt,yshift=+0pt] {\trimfig{figs/primary/t4}{\figWidth}};
\draw(7.45,-.5) node[anchor=south west,xshift=-4pt,yshift=+0pt] {\trimfigb{figs/primary/t4mesh}{4.7cm}};
\end{tikzpicture}
\end{center}
  \caption{\editBOne{}{%
    Left: Distribution functions at $t=0$, 2 and 4. The top row uses the
    Dirichlet boundary condition at $p=3\hat{v}_t$ and the bottom row uses the
    compatible boundary condition at $p=0$.  Right: The adaptive mesh at the
    final time.}}
  \label{fig:pzero}
\end{figure}
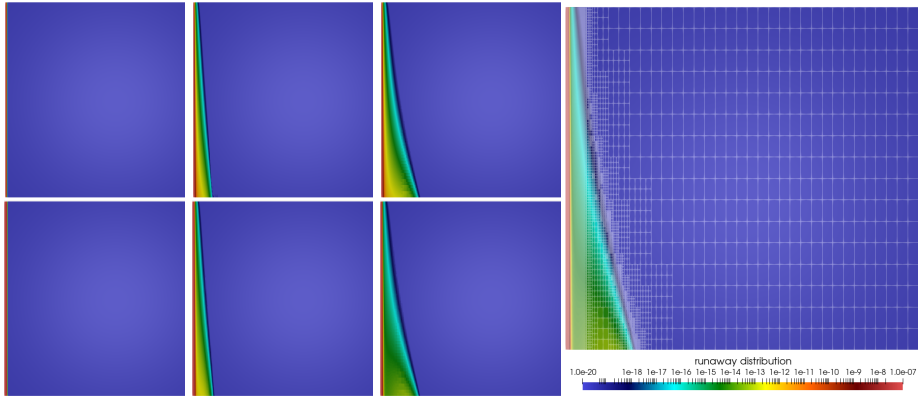
}

\editBOne{}{
In this test, we provide an extension of the boundary up to $p=0$. This study uses a primary runaway electron test, in which the initial condition is given as
\[
f_0(p,\xi) = \frac{1}{\hat{v}_t^3 \pi^{\frac{3}{2}} } \exp{\left(\frac{1-\sqrt{1+p^2}}{\hat{v}_t^2/2}\right)},
\]
where the normalized thermal velocity is taken as $\hat{v}_t = 0.1$.
The runaway electron in the tail region will grow exponentially due to the driving electric field. We select an electric field of $E=3$, which is well above the runaway electron threshold.
We extended the idea given in~\cite{mohseni2000numerical} to handle the boundary
condition at $p=0$ with AMR.
Using the expansion of $f(p, \xi)$ with azimuthal symmetry, it can be derived that the compatible boundary condition for distribution at $p=0$ should be
\begin{equation}
  \label{eq:p-zero-bc}
  f(-p,-\xi) = f(p,\xi).
\end{equation}
This condition is used to fill in the ghost point at the negative $p$ location.
We utilize the connectivity of trees of the p4est library to connect two coarse
quadrants in $\xi$-direction along the $p=0$ quadrant face; and we also flip the
orientation of this connectivity to account for the alternating sign of the
$\xi$ argument in \eqref{eq:p-zero-bc}.  Therefore, the mesh adaptivity
along the $p=0$ boundary can be treated consistently with all other cell
boundaries with varying levels of refinement.
The results of two different types of boundary conditions are presented in Figure~\ref{fig:pzero}.
The physical locations are perfectly aligned in the figure for easy comparison.
It is clear to see that the boundary layer of $p=0$ is thicker than that of $p=3\hat{v}_t$, but they match well starting from $p=3\hat{v}_t $,
despite different types of boundary conditions being used.
It is also observed that the shapes of runaway tails match well throughout the entire run.
At the final time, it is observed that the details of runaway tails are slightly different,
which is due to the impact of different boundary conditions.
In our forthcoming physics study, we will document the specifics of runaway growth rates in the two different setups.
}

\subsection{Interaction between a Maxwellian and a runaway tail}

We next present a physics-motivated  example to demonstrate the AMR capability for resolving both  of a bulk Maxwellian and a runaway distribution tail.
Our aim is to study the algorithm performance in a practical setting, and thus we design a test that is still relatively simple.
The problem is initialized with a Maxwellian uniformly in $\xi$  under a small perturbation in the tail, which is centered around $(p, \xi) = (40, -0.9)$,
\[
f_0(p,\xi) = \frac{1}{\hat{v}_t^3 \pi^{\frac{3}{2}} } \exp\editBOne{}{\left(\frac{1-\sqrt{1+p^2}}{\hat{v}_t^2/2}\right)} + 10^{-15} \exp\left(\frac{(p-40)^2}{25} \right) \exp\left(\frac{-(\xi+0.9)^2}{0.0025} \right),
\]
where the normalized thermal velocity is taken as $\hat{v}_t = 0.1$. A Neumann boundary condition is applied at the right boundary, while the left boundary uses a Dirichlet boundary condition from
the initial Maxwellian. The computational domain is $[0.3, 60]\times[-1, 1]$ as we let $p_{\min} = 3 \, \hat{v}_t$ for which the Dirichlet is still a good approximation.  We choose a numerical scheme based on the QUICK finite difference scheme and an  implicit second-order ESDIRK integrator through PETSc's TS interface.

The proposed AMR algorithm and indicators are tested using this problem.
The coarsest mesh in the AMR algorithm is chosen to be $48\times 8$, and we use a maximum of 6 more levels of refinement.
Initially, we enforce the AMR algorithm to refine around the left and bottom boundaries for better resolving the Maxwellian bulk and the large portion of the perturbation (see the top row in Figure~\ref{fig:chiu1}).
After the initial indicator is computed from the given solution, the proposed AMR indicator prediction is used to evolve those indicator values over time.
The mesh is checked and updated every 6 time steps. The averaged number of DOFs
in the whole simulation is only \editOne{287,684}{71,156, which amounts to
a reduction in DOFs of $\sim$4.5\%
with the same finest level as the adaptive mesh.}
The previous work~\cite{GuoMcDevittTang2017} needed \editOne{several}{0.62}
million DOFs through a structured stretched grid to have a comparable
resolution, \editOne{}{which again highlights the computational savings obtained
with AMR.}

{
\newcommand{\figWidth}{12cm}
\newcommand{\trimfig}[2]{\trimFig{#1}{#2}{.0}{.0}{0}{0}}
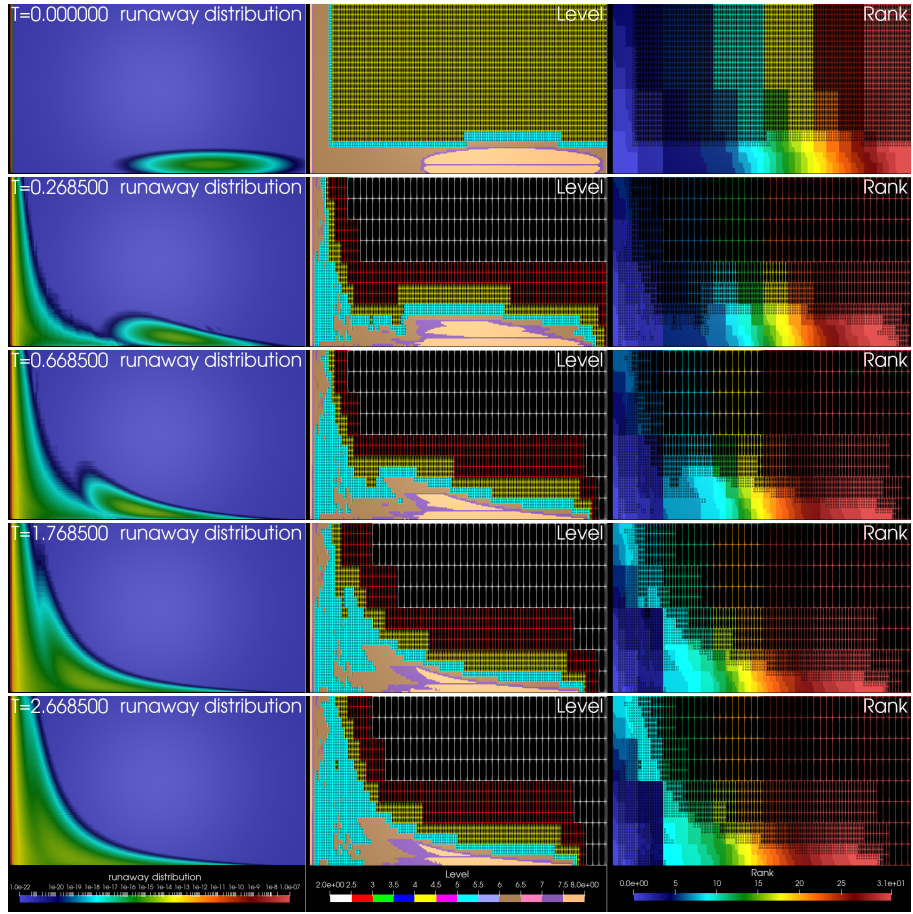
\begin{figure}[tb]
\begin{center}
\begin{tikzpicture}[scale=1]
\useasboundingbox (0.0,0) rectangle (12,11.7);  %
\draw(0,9.2) node[anchor=south west,xshift=-4pt,yshift=+0pt] {\trimfig{figs/chiu2/full0}{\figWidth}};
\draw(0,6.9) node[anchor=south west,xshift=-4pt,yshift=+0pt] {\trimfig{figs/chiu2/full1}{\figWidth}};
\draw(0,4.6) node[anchor=south west,xshift=-4pt,yshift=+0pt] {\trimfig{figs/chiu2/full2}{\figWidth}};
\draw(0,2.3) node[anchor=south west,xshift=-4pt,yshift=+0pt] {\trimfig{figs/chiu2/full3}{\figWidth}};
\draw(0,-.62) node[anchor=south west,xshift=-4pt,yshift=+0pt] {\trimfig{figs/chiu2/full_range}{\figWidth}};
\draw(0,0) node[anchor=south west,xshift=-4pt,yshift=+0pt] {\trimfig{figs/chiu2/full4}{\figWidth}};
\end{tikzpicture}
\end{center}
  \caption{Distribution functions (left), mesh and AMR levels (middle), and MPI ranks (right) over time.}
  \label{fig:chiu1}
\end{figure}
}

The numerical results are presented in Figure~\ref{fig:chiu1}. The distribution solutions along with the adaptive meshes are shown.
We note that the Maxwellian bulk and the tail perturbation are slowly merging
into each other due to the interaction between the driven electric field, the
relativistic Fokker--Planck collision, synchrotron radiation,
and the secondary knock-on source. Here we let the damping strength be $\alpha=0.1$ and turn on the partial screening effects. We note that our AMR algorithm captures the interesting features in the solution very well.
The finest mesh is around the region where it has a large gradient, which is either near the Maxwellian boundary layer or near the runaway tail.
It is also critical for this test that the solver be capable of resolving a large variation of the solution. The solution varies from $10^{-2}$ (the bulk Maxwellian boundary layer) to $10^{-20}$ (the runaway tail).
In order to demonstrate the large variation, the distribution function in Figure~\ref{fig:chiu1} is presented in the log scale, which shows that our adaptive solver is capable of resolving both regions.
To demonstrate the dynamic load-balancing, we include the MPI ranks  in the figure.
Note that this test is designed to be relatively simple so that 32 processors are sufficient.

{
\newcommand{\drawPlot}[4]{%
\begin{scope}[#1]
\draw(0,0) node[anchor=south west,xshift=-4pt,yshift=+0pt] {\trimfig{figs/chiu2/int#2}{\figWidth}};
\draw(3.5,.7) node[draw,fill=white,anchor=east,xshift=2pt,yshift=0pt,scale=0.8]{\scriptsize #3};
\end{scope}
}

\newcommand{\figWidth}{4.cm}
\newcommand{\trimfig}[2]{\trimFig{#1}{#2}{.0}{.0}{0}{0}}
\newcommand{\trimfigb}[2]{\trimFig{#1}{#2}{.0}{.0}{0}{0.1}}
\begin{figure}[htb]
\begin{center}
\begin{tikzpicture}[scale=1]
\useasboundingbox (0.0,-.5) rectangle (12,5.4);  %
\drawPlot{xshift= -.1cm,yshift=2.2cm}{0}{$t=0$}{};
\drawPlot{xshift= 4cm,yshift=2.2cm}{1}{$t=0.269$ }{};
\drawPlot{xshift= 8.1cm,yshift=2.2cm}{2}{ $t=0.669$}{};
\drawPlot{xshift= 1.95cm,yshift=-1cm}{3}{ $t=1.769$}{};
\drawPlot{xshift= 6.05cm,yshift=-1cm}{4}{ $t=2.669$}{};
\end{tikzpicture}
\end{center}
  \caption{Runaway electron distributions in $p$ over time.}
  \label{fig:chiu_runaway}
\end{figure}
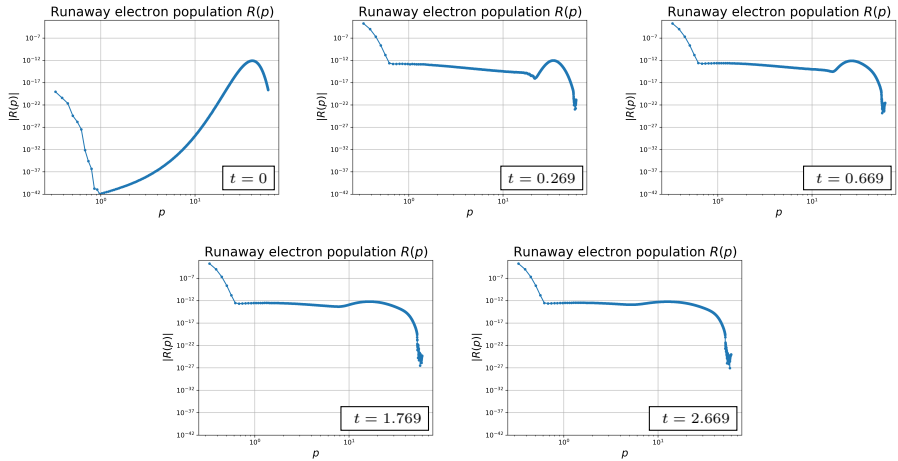
}

In addition to the distribution, we evaluate the so-called runaway electron population $R(p, t)$,
\begin{align*}
R(p, t) :=  \int_{\xi=-1}^{\xi=1} f(p,\xi, t)  v_\parallel 2\pi p^2  \, d\xi,
\end{align*}
where the parallel velocity is defined as  $v_\parallel = {p \, \xi}/{\gamma \, m_e}$.
This is a distribution in the $p$ direction to measure the runaway electron strength, which is a good indicator of the number of high-energy runaway electrons in the system.
Figure~\ref{fig:chiu_runaway} shows that initially there is a large portion of runaway electrons due to the perturbation and that they are slowly reduced and merged into the bulk.

{
\newcommand{\trimfig}[2]{\trimFig{#1}{#2}{.0}{.0}{0}{0}}
\begin{figure}[htb]
\begin{center}
\begin{tikzpicture}[scale=1]
\useasboundingbox (0,-.5) rectangle (12,3.8);
\draw(0,-.9) node[anchor=south west,xshift=-4pt,yshift=+0pt] {\trimfig{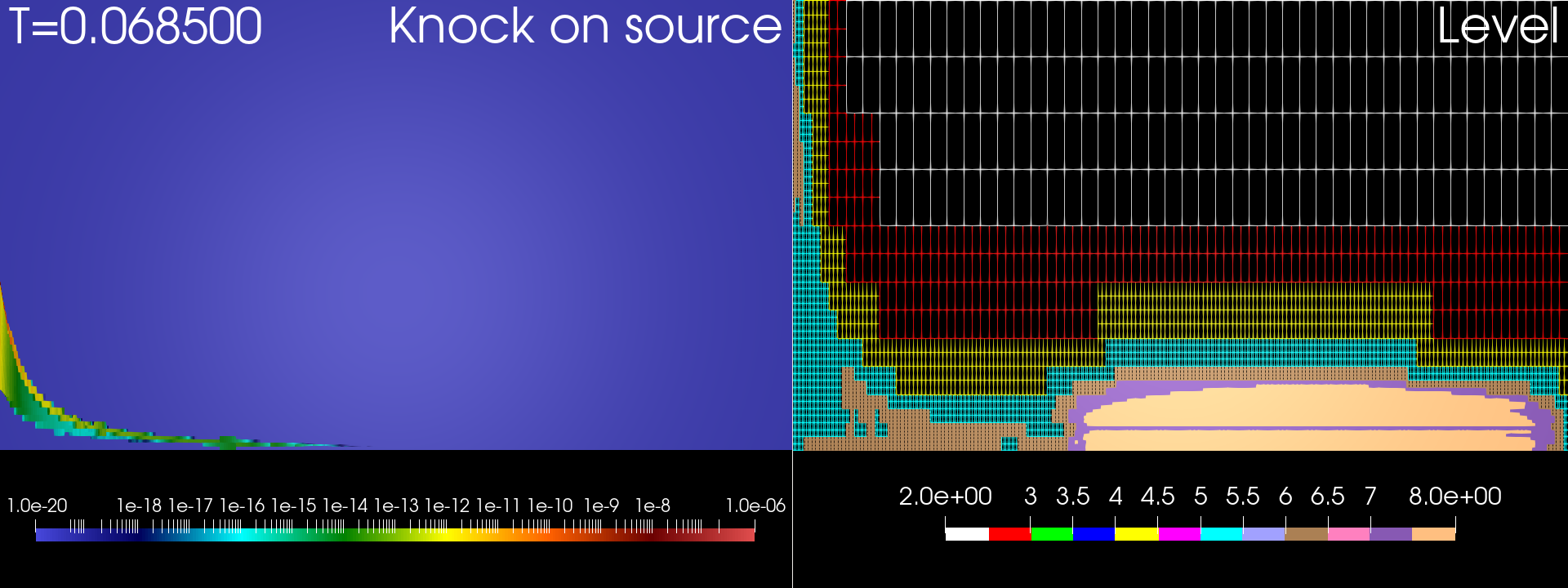}{12cm}};
\end{tikzpicture}
\end{center}
  \caption{Knock-on electron source at the first checkpoint.}
  \label{fig:chiu2}
\end{figure}
}

We further present the knock-on electron source term in Figure~\ref{fig:chiu2}. We note that the knock-on source is exactly in the range of $\xi \in [-\sqrt{\gamma/(\gamma+1)}, -p/(\gamma+1)]$ as we indicated in Section~\ref{sec:knock_on}.
The mesh plot of Figure~\ref{fig:chiu2} shows that the AMR algorithm is able to capture the source term in this narrow region through adjusting the mesh on the fly.

\section{Conclusion}
\label{sec:conclusion}

In this work we designed and developed adaptive, scalable, fully implicit solvers for the relativistic Fokker--Planck equation in phase space.
One key goal of the work is to develop a scalable and efficient dynamic AMR.
In practice, one needs to determine the requirements on scalability and then
adjust the parameters of the AMR algorithms in terms of refinement frequencies
and the AMR prediction horizon.  The present work proposes a new prediction strategy for refinement indicators,
giving a potential tool to exert control
over accuracy at computational overheads that were demonstrated to be low.
Numerical experiments quantify the predictive approach behaving better than conventional indicators, which lack prediction, in terms of resolving features in the solutions.
When the need for scalability is of higher importance to the application, one can use the
proposed AMR prediction at the expense of further increases of computational
overheads from finer meshes.

We contribute a new scalable implementation of the proposed algorithm using the p4est and PETSc frameworks.
Although the focus of the current work is to develop a solver aiming specifically at runaway electrons during tokamak disruptions,
a large portion of our proposed algorithm is general and thus can be potentially applied to solving many other time-dependent PDEs with dynamic mesh adaptivity.
Several numerical examples are presented in order to verify the solver's accuracy, scalability, and efficiency in adaptivity.
These numerical results confirm the parallel and algorithmic scalabilities and accuracy of the schemes.
A significant improvement of computational cost owing to the AMR algorithm was demonstrated using a practical disruption study.

Future work includes generalizing the fixed external field in the current model to a self-consistent model that evolves the electric field through involving the runaway current.
\editCOne{Another direction to pursue is to extend the computational domain to
the low-energy region, which needs a careful treatment of the boundary condition
at $p=0$.}{}

\section*{Acknowledgment}
This research used resources provided by the Los Alamos National Laboratory Institutional Computing Program, which is supported by the U.S. Department of Energy National Nuclear Security Administration under Contract No.~89233218CNA000001,
and the National Energy Research Scientific Computing Center
(NERSC), a U.S. Department of Energy Office of Science User Facility located at Lawrence Berkeley National Laboratory, operated under Contract No. DE-AC02-05CH11231 using NERSC award FES-ERCAP0021219.

This research was funded in part and used resources of the Argonne Leadership Computing Facility, which is a DOE Office of Science User Facility supported under Contract No. DE-AC02–06CH11357.

The Texas Advanced Computing Center (TACC) at The University of Texas at Austin
provided HPC resources that have contributed to the research results reported
within this paper.

\clearpage
\appendix

\section{Guiding center coordinate transformation}
\label{sec:coordinate}

The coordinate transformation from
$(\vec{X}, p_\parallel, \vec{p}_\perp)$ to $(\vec{X}, p, \xi)$ is described in this section. The discussion focuses on the phase space.
The differential element for momentum space integration is
\begin{align}
d^3{\vec{p}} = dp_\parallel d^2{\vec{p}_\perp} = dp_\parallel (p_\perp dp_\perp d\theta).
\end{align}
The guiding center model assumes azimuthal symmetry. Thus one can integrate over $\theta\in (0,2\pi)$ (here $p_\perp$ is positive), giving
\begin{align}
\int_\theta d^3\vec{p} = 2\pi p_\perp d p_\parallel d p_\perp.
\end{align}
The transformation from $(p_\parallel, p_\perp)$ to $(p,\xi)$ coordinate is given by
\begin{align}
p & = \sqrt{p_\parallel^2 + p_\perp^2} \\
\xi & = \frac{p_\parallel}{p} = \frac{p_\parallel}{\sqrt{p_\parallel^2+p_\perp^2}}.
\end{align}
The Jacobian of this transformation is
\begin{align}
\frac{\partial (p_\parallel, p_\perp)}{\partial (p,\xi)}
=
\left|\frac{\partial p_\parallel}{\partial p} \frac{\partial p_\perp}{\partial\xi}
- \frac{\partial p_\parallel}{\partial\xi} \frac{\partial p_\perp}{\partial p}\right|
= \left|- \xi \frac{p\xi}{\sqrt{1-\xi^2}} - p\sqrt{1-\xi^2}\right|
= \frac{p}{\sqrt{1-\xi^2}}.
\end{align}
One then has
\begin{align}
\int_\theta d^3\vec{p} = 2\pi p_\perp d p_\parallel d p_\perp
= 2\pi p\sqrt{1-\xi^2} \frac{p}{\sqrt{1-\xi^2}} dp d\xi
= 2\pi p^2 dp d\xi .
\end{align}
Noting that $\xi\in (-1,1),$ the integration over the entire momentum space is simply
$(4\pi/3) p^3,$ as expected.
This also indicates  that the overall Jacobian of $(p,\xi,\theta)$ coordinates is $p^2,$
for gyro-angle independent problems, and the overall Jacobian for the $(p,\xi)$ coordinate
is $2\pi p^2.$ For convenience, we will be using $J=p^2$ for the $(p,\xi)$ coordinates.

\section{Summary of physical quantities in RFP}
\label{sec:quantities}
Table~\ref{table:notation} summarizes some physical quantities and the corresponding notations  in this paper.

\begin{table}
\caption{Physical quantities in the RFP model and the notations used in the current work.\label{table:notation}}
\centering
\vskip 1ex
\footnotesize
\begin{tabular}{lc|lc}
\toprule
  \thead{Description} & \thead{Symbol} &  \thead{Description} & \thead{Symbol} \\
\midrule
  time     & $t$   & electron  charge        & $e$\\
  configuration space    & $\xv$ & electron mass & $m_e$  \\
  phase space & $\pv$  &  speed of light         & $c$ \\
  phase space parallel to $\Bv$      & $p_\parallel$     &  Lorentz factor        & $\gamma$      \\
  phase space perpendicular to $\Bv$ & $p_\bot$      &  electrical permittivity & $\epsilon_0$           \\
  magnitude of the momentum       & $p=\norm{\pv}$       & Coulomb logarithm & $\ln \Lambda$      \\
  pitch angle                     & $\xi=p_\parallel/p$       &   thermal velocity  & $v_t$       \\
  runaway election density        & $f$        & effective charge number & $Z$               \\
  energy flux                     & $\Gamma_p$         & collision time scale                     & $\tau_c$      \\
  pitch angle flux                & $\Gamma_\xi$     & synchrotron  time scale                 & $\tau_s$        \\
  magnetic field                  & $\Bv$       &      intensity of damping    & $\alpha=\tau_c/\tau_s$             \\
  induced electric field  parallel to $\Bv$        & $E_\parallel$      &     M{\o}ller cross section & ${d\sigma}/{dp}$    \\
\bottomrule
\end{tabular}
\end{table}

\clearpage
\bibliography{references}

 \begin{center}
	\scriptsize \framebox{\parbox{4in}{Government License (will be removed at publication):
			The submitted manuscript has been created by UChicago Argonne, LLC,
			Operator of Argonne National Laboratory (``Argonne").  Argonne, a
			U.S. Department of Energy Office of Science laboratory, is operated
			under Contract No. DE-AC02-06CH11357.  The U.S. Government retains for
			itself, and others acting on its behalf, a paid-up nonexclusive,
			irrevocable worldwide license in said article to reproduce, prepare
			derivative works, distribute copies to the public, and perform
			publicly and display publicly, by or on behalf of the Government. The Department of Energy will provide public access to these results of federally sponsored research in accordance with the DOE Public Access Plan. http://energy.gov/downloads/doe-public-access-plan.
}}
	\normalsize
\end{center}

\end{document}